\DeclareMathOperator{\ord}{ord} \DeclareMathOperator{\lc}{lc}
\newcommand{\cB}{\mathcal{B}}
\newcommand{\cH}{\mathcal{H}}
\newcommand{\cR}{\mathcal{R}}
\newcommand{\cT}{\mathcal{T}}
\newcommand{\bbE}{\mathbb{E}}
\newcommand{\bbN}{\mathbb{N}}
\newcommand{\bbZ}{\mathbb{Z}}
\newcommand{\bbQ}{\mathbb{Q}}
\newcommand{\coC}{\mathbf{C}}
\newcommand\kdv{\text{kdv}}
\newcommand{\coeff}{\operatorname{coeff}}
\newtheorem{thm}{Theorem}[section]
\newtheorem{theorem}[thm]{Theorem}
\newtheorem{prop}[thm]{Proposition}
\newtheorem{lemma}[thm]{Lemma}
\newtheorem{corollary}{Corollary}[thm]
\theoremstyle{definition}
\newtheorem{definition}[thm]{Definition}
\newtheorem{ex}[thm]{Example}
\theoremstyle{remark}
\newtheorem{rem}[thm]{Remark}
\newcommand{\cBa}{W}
\newcommand{\centr}{\mathcal{Z}}
\begin{document}

\title{Computing almost commuting bases of ODOs\\ and Gelfand-Dickey hierarchies}
\author[a]{Antonio Jimenez-Pastor}
\author[a]{Sonia L. Rueda}
\author[b]{Maria-Angeles Zurro}
\author[c]{Rafael Hernandez Heredero}
\author[c]{Rafael Delgado}

\affil[a]{Departamento de Matem\'atica Aplicada, E.T.S.
Arquitectura, Universidad Polit\'ecnica de Madrid}
\affil[b]{Departamento de Matemáticas, Universidad Autónoma de
Madrid} \affil[c]{Departamento de Matemática Aplicada a las
Tecnologías de la Información y las Comunicaciones, ETS de
Ingenieros de Telecomunicación, Universidad Politécnica de Madrid}

\abstract{%
Almost commuting operators were introduced in 1985 by George Wilson
to present generalizations of the Korteweg-de Vries hierarchy,
nowadays known as Gelfand-Dickey (GD) hierarchies. In this paper, we
review the formal construction of the vector space of almost
commuting operators with a given ordinary differential operator
(ODO), with the ultimate goal  of obtaining a basis by computational
routines, using the language of  diffe\-rential polynomials. We use
Wilson's results on weighted ODOs to gua\-ran\-tee the solvability
of the triangular system that allows to compute the homogeneous
almost commuting operator of a given order in the ring of ODOs. As a
consequence, the computation of the equations of the GD hierarchies
is achieved without using pseudo-differential operators. A new
package in SageMath called  \texttt{dalgebra} has been designed to
perform symbolic calculations in differential domains. The
algorithms to calculate the almost commuting basis and the GD
hierarchies in the ring of ODOs are implemented in SageMath, and
explicit examples are provided. }

\pacs[MSC{[}2020{]}]{13N10, 13P15, 12H05}

\maketitle

\section{Introduction}\label{sec:introduction}

Almost commuting operators were introduced in 1985 by George Wilson
in~\cite{Wilson1985} to present the Lax representations and
generalizations of the Korteweg-de Vries hierarchy, nowadays known
as the Gelfand-Dickey (GD) hierarchies \cite{Dikii,
DrinfeldSokolov1985}. The generalization of the Korteweg-de Vries
hierarchy, passing from a second-order linear differential operator
to an $n$-th order operator $L$ was suggested by Kri\-che\-ver, and
implemented by Gelfand and  Dickey, see references in  \cite{Dikii,
DrinfeldSokolov1985}. The standard construction of these generalized
hierarchies is based on fractional powers $L^{m/n}$ of a
differential operator $L$ of order $n$, which live in a ring of
pseudo-differential operators.

The question addressed in this paper is the design of a symbolic
algorithm that allows the computation of the GD hierarchies but does
not require computations with pseudo-differential operators. We
develop an algorithm that requires computations in a ring of
differential operators $\mathcal{D}$, whose coefficients belong to a
ring of differential polynomials. This is made possible by the
notion of \emph{almost commuting operator} defined by G.~Wilson in
~\cite{Wilson1985}, as an operator $A\in \mathcal{D}$ such that
$[L,A]=LA-{ AL}$ has order $\leq n-2$. We reorganized previously
known ideas (by Wilson and many others) to prove that the vector
space $W(L)$, of almost commuting operators with a given ordinary
differential operator $L$
\[W(L)=\{A\in \mathcal{D}\mid \ord [L,A]\leq n-2\}={ Z((L))_+,}\]
equals the set $Z((L))_+$, of positive parts of the elements of the
centralizer $Z((L))$ of $L$ in the ring of pseudo-differential
operators, see Theorem \ref{thm-pcen}.

Our motivation comes from the need of having a basis of almost
commuting operators, and explicit formulas for integrable
hierarchies, to search for new families of algebro-geometric
differential operators, those having centralizers that are not
polynomial rings and containing pairs of ODOs of coprime orders
\cite{We, MRZ2, RZ2024}. Recursive methods exist  to compute almost
commuting bases, for specific values of the order $n$ of $L$. For
$n=2$, recursive formulas for the Korteweg-de Vries hierarchy were
given in  \cite{GH}, that can be obtained by means of a recursion
operator \cite{Olver77}. For $n=3$ recursive formulas can be found
in \cite{DGU}, related with Boussinesq systems. Our ultimate goal is
to compute almost commuting bases by computational routines for an
arbitrary value of $n$. We saw in the work of Grünbaum~\cite{Grun}
the possibility of developing symbolic algorithms for this purpose.
We used preliminary implementations of these algorithms in Maple to
compute examples of algebro-geometric differential operators in
previous works of some of the authors: see \cite{Previato2019,
RZ2021}, and \cite{RZ2024}. New examples of prime order ODOs will
allow one to extend the results in \cite{RZ2024}, and then develop a
suitable Picard-Vessiot theory for spectral problems. In addition,
an operator $A$ of the almost commuting basis of $L$ provides a Lax
representation $L_t=[A,L]$ and is a preparatory step towards the
computation of recursion operators as in~\cite{GKS}.

\medskip

Considering a set $U=\{u_2,\ldots ,u_n\}$ of differential variables
over the field of rational numbers $\bbQ$, with respect to a given
derivation $\partial$, we aim to compute a basis of $W(L)$ for the
formal differential operator
\begin{equation}\label{equ:L}
    L = \partial^n + u_{2}\partial^{n-2} + \ldots + u_{n-1}\partial + u_{n}.
\end{equation}
Wilson defined a weight function on the ring of differential
polynomials $\cR=\bbQ\{U\}$, that guaranteed the existence of a
unique basis $\cH(L)=\{P_m\mid m\in\bbN\}$ { of $W(L)$,} whose
operators are homogeneous with respect to the weight function. A
natural approach to seek for an homogeneous almost commuting
operator  $P_m$ of order~$m$ is to define the formal operator
\[\tilde{P}_m=\partial^m + y_{2}\partial^{m-2} + \ldots + y_{m-1}\partial + y_{m},\]
whose coefficients  are differential variables in $Y=\{y_2,\ldots ,y_m\}$ with respect to~$\partial$, and force $[L,\tilde{P}_m]$ to be a differential operator of order $\leq n-2$. This produces a differential system $\bbE_{m,n}$ of $m-2$ equations given by 
differential polynomials
in $\bbQ\{U,Y\}$. We proved that $\bbE_{m,n}$ is a triangular system
w.r.t.~the variables $Y$ in Theorem \ref{thm-triangularE} and
moreover that
\[\bbE_{m,n}=\{y'_{i}+e_i(Y)=0\mid i=2,\ldots m\}\]
where $e_2\in \bbQ\{U\}$ and
$$e_i(Y)\in \bbQ\{U\}\{y_{2},\ldots ,y_{i-1}\},\,\,\, i\geq 3$$
are differential polynomials, homogeneous with respect to an
extended weight function on $\bbQ\{U,Y\}$. We use Wilson's results
on weighted ODOs to guarantee the solvability of this triangular
system, that allows to compute a unique solution set $Z$ of
$\bbE_{m,n}$ w.r.t.~$Y$. We can solve $\bbE_{m,n}$ by using the
integration methods for differential polynomials described in
~\cite{Bilge1992} and ~\cite{Boulier2016}, combined with backwards
substitution.

Replacing each variable in $Y$ by the corresponding solution in $Z$,
the almost commuting operator $P_m$ is obtained together with the
commutator
\[
{[P_m ,L]}=H_{m,0}+H_{m,1}\partial+\cdots
+H_{m,n-2}\partial^{n-2},\] where $H_{m,k}(U)$, $k=0,\ldots ,n-2$
are the differential polynomials in $\bbQ\{U\}$ that determine the
GD hierarchies {as being defined in \cite{Dikii,
DrinfeldSokolov1985, Wilson1985}}.

A new package in SageMath called  \texttt{dalgebra} has been designed to perform symbolic calculations in differential domains. The algorithms to compute the almost commuting basis and the GD hierarchies in the ring of ODOs, are implemented in the the module \texttt{almost\_commuting.py} ~\cite{JRZ2023,SAGE2023}. 
More details on this implementation and the computations performed
are included in Section~\ref{ExperimentalResults}. A data set has
been generated including almost commuting bases and GD hierarchies
for orders $n=2,3,5,7$ of $L$, considering it would be useful to
researchers in Mathematics and Physics.

\medskip

{\it The paper is organised as follows}. In
Section~\ref{sec:preliminaries} we provide  definitions for the
differential algebra concepts that are used throughout the paper. We
review in Section~\ref{sec:wilson} the main structural result of
Wilson for almost commuting operators. In Section~\ref{sec:almost}
we define the system $\bbE_{m,n}$ that allows the  computation of
almost commuting bases. We will prove that this system is a
triangular differential system with a unique weighted solution $Z$.
We proceed to show in Section~\ref{sec:hierarchies} how the GD
hierarchies are obtained as a by-product, and compute them for an
operator $L$ of orders $3$ and $5$. Finally,
Section~\ref{ExperimentalResults} contains a description of the
implementation in SageMath \cite{SAGE2023} of the symbolic
algorithms to compute almost commuting bases and GD hierarchies.

\section{Preliminaries}\label{sec:preliminaries}

A differential ring $(R, \partial )$ is a ring $R$ (assumed
associative, with identity) equipped with a specified derivation
$\partial$, that is, an additive map $\partial: R\rightarrow R$
satisfying the product rule: $\partial(xy) = \partial(x)y +
x\partial(y)$ for all $x, y\in R$. The elements $r\in R$ such that
$\partial(r) = 0$ are called constants of $R$, and the collection of
these constants forms a subring of $R$. A differential field $(K,
\partial )$ is a differential ring that is a field.

For concepts in differential algebra, related with differential
polynomials, we refer to \cite{Kolchin, Ritt}. Let $\bbN$ be the set
of positive integers including zero. Given a differential variable
$y$ with respect to $R$, we define $y^{(i)}$ as $\partial^i (y)$,
for $y\in\bbN$, $i\geq 1$ and $y^{(0)}$ as $y$. The ring of
differential polynomials over $R$ in the differential variable $y$
is the polynomial ring
\[R\{y\} =  R[y^{(i)}\mid i \in \bbN].\]
The notation $y'$, $y''$, $y'''$ will also be used for $y^{(i)}$,
$i=1,2,3$. Observe that~$R\{y\}$ is a differential ring with the
natural extended derivation, that we also call~$\partial$ abusing
notation. We can iterate this construction, adding several
differential variables. Given differential variables $y$ and $z$
over $R$, the ring of differential polynomials in $y$ and $z$ is
denoted by $R\{y,z\}$ and defined as $R\{y\}\{z\} \simeq
R\{z\}\{y\}$.

A differential ring $(R,\partial)$, defines a ring of (linear)
differential operators $R[\partial]$, which is a non-commutative
ring, an Ore polynomial ring \cite{Good}, where the multiplication
of two elements is the composition of the two differential
operators, that is defined by the rule $\partial r = \partial(r) +
r\partial$ for all $r \in R$. A differential operator $A\in
R[\partial]$ can be written in a unique way as
\[A = a_n \partial^n + a_{n-1}\partial^{n-1}+\ldots +a_1\partial+a_0, \mbox{ with } a_i\in R,\,\,\,a_n\neq 0.\]
Then the \emph{order} of $A$ is $n$, denoted  $\ord(A)=n$,  with the
convention $\ord (0)=-\infty$. This order function shares properties
with the degree of a polynomial: the order of a product is the sum
of the orders and the order of a sum is bounded by the maximal order
of the summands.  The coefficient of the highest order term of~$A$
is the \emph{leading coefficient} of $A$, denoted $\lc(A)=a_n$. We
say that an operator~$A\in R[\partial]$ of order $n$ is in
\emph{normal form} if it is monic, i.e.~it has $\lc(A)=1$ and the
coefficient of the term of order $n-1$ is zero, namely,
\[A = \partial^n + a_{n-2}\partial^{n-2} + \ldots +a_1\partial+a_0.\]
In   \cite{Mu2, Zheglov} these operators are called {\it elliptic}.
Differential operators with coefficients in a differential field can
be always brought to normal form, by using a change of variable and
conjugation by a function, which are the only two  possible
automorphisms.

Let us assume that the ring of constants of $R$ is a field $\coC$ of
zero characteristic. Since $R[\partial]$ is a non-commutative ring,
we can define the corresponding Lie-bracket on $R[\partial]$ as a
$\coC$-bilinear map $[\cdot,\cdot]$ defined by
\[[A, B] = AB - BA.\]
It is clear that $A$ and $B$ commute if and only if $[A,B] = 0$. We
also say that $[A,B]$ is the \emph{commutator} of $A$ and $B$. We
define the \emph{centralizer} of $A$ {in $R[\partial]$} as
\[\centr(A) = \{B \in R[\partial]\ \mid [A,B] = 0\}.\]
Since $[\cdot,\cdot]$ is a $\coC$-bilinear map, then $\centr(A)$ is
a $\coC$-module.

\medskip

{ The next lemma states a basic property of ordinary differential
operators, that for operators in normal form will be important to
develop the main results of this paper, for this reason a proof is
included.}

\begin{lemma}\label{lem:commutator_order}
    Let $A,B$ be two differential operators in $R[\partial]$ of orders $n$ and $m$ respectively. Then $\ord([A,B]) \leq n+m-1$. If $A$ and $B$ are both in normal form, then $\ord([A,B]) \leq n + m -3$.
\end{lemma}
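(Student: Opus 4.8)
The plan is to prove both bounds by reading off the top coefficients of the two products directly from the commutation formula
\[
\partial^i r = \sum_{k=0}^{i} \binom{i}{k} r^{(k)} \partial^{i-k}, \qquad r \in R,
\]
which follows by an easy induction from the defining rule $\partial r = \partial(r) + r\partial$. Writing $A = \sum_i a_i \partial^i$ and $B = \sum_j b_j \partial^j$ with $a_n, b_m \neq 0$, this gives $AB = \sum_{i,j,k} \binom{i}{k} a_i b_j^{(k)} \partial^{i+j-k}$, so the coefficient of $\partial^{n+m-s}$ in $AB$ is the finite sum over index triples with $i+j-k = n+m-s$, $i \le n$, $j \le m$. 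I would organise the whole argument around extracting these coefficients for $s = 0, 1, 2$, noting that the relevant triples are exactly those with $(n-i)+(m-j) \le s$.

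For the first inequality, the only triple giving order $n+m$ is $(i,j,k) = (n,m,0)$, producing leading coefficient $a_n b_m$ for $AB$ and, by the symmetric computation, $b_m a_n$ for $BA$. As the coefficients in $R$ commute with one another, these agree and cancel in $[A,B] = AB - BA$, so $\ord([A,B]) \le n+m-1$.

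For the refined bound I would specialise to normal form, where $a_n = b_m = 1$ and $a_{n-1} = b_{m-1} = 0$, so that every derivative of $a_n$ or $b_m$ and every occurrence of $a_{n-1}, b_{m-1}$ (and their derivatives) vanishes. At order $n+m-1$ the three contributing triples $(n,m,1)$, $(n,m-1,0)$, $(n-1,m,0)$ have coefficients $n a_n b_m'$, $a_n b_{m-1}$, $a_{n-1} b_m$, each carrying a vanishing factor; hence the $\partial^{n+m-1}$ coefficient of $AB$---and likewise of $BA$---is zero. At order $n+m-2$ there are six triples, but under the normal-form hypotheses every term containing a derivative of $a_n, b_m$ or a factor $a_{n-1}, b_{m-1}$ dies, leaving only $a_{n-2}b_m$ and $a_n b_{m-2}$, which collapse to $a_{n-2} + b_{m-2}$ for $AB$. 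The same computation for $BA$ (swap $A \leftrightarrow B$, i.e.\ $n \leftrightarrow m$ and $a \leftrightarrow b$) gives $b_{m-2} + a_{n-2}$, so the two cancel and $\ord([A,B]) \le n+m-3$.

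The only genuine obstacle is bookkeeping: correctly enumerating the index triples feeding each output order and tracking their binomial weights. The structural reason the sharpened bound holds is twofold---in normal form the leading coefficient is the constant $1$, so its derivatives vanish, and the subleading coefficient is $0$, which together annihilate every term at order $n+m-1$; and the lone survivors at order $n+m-2$, namely $a_{n-2}+b_{m-2}$, are symmetric in $A$ and $B$ and therefore cancel on forming the commutator. I would present the $s=2$ enumeration compactly (e.g.\ as a short table of triples) rather than expand every term in prose.
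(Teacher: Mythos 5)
Your proof is correct, and while the first bound is argued exactly as in the paper (the only order-$(n+m)$ contribution is $a_nb_m$ in $AB$ and $b_ma_n$ in $BA$, which cancel since the coefficient ring is commutative --- an assumption the paper uses implicitly via $\lc(AB)=\lc(A)\lc(B)=\lc(BA)$ and you state explicitly), your route to the refined bound is genuinely different. You expand both products through the generalized Leibniz rule $\partial^i r=\sum_{k}\binom{i}{k}r^{(k)}\partial^{i-k}$ and enumerate the index triples feeding orders $n+m-1$ and $n+m-2$, observing that normal form ($a_n=b_m=1$, $a_{n-1}=b_{m-1}=0$) kills every term except the symmetric survivors $a_{n-2}+b_{m-2}$, which cancel upon forming the commutator; your enumeration of the three triples at $s=1$ and six at $s=2$ is complete and the coefficients are right. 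The paper instead decomposes $A=\partial^n+\tilde A$ and $B=\partial^m+\tilde B$ with $\ord(\tilde A)\le n-2$, $\ord(\tilde B)\le m-2$, splits the bracket bilinearly into four pieces, notes $[\partial^n,\partial^m]=0$, and bounds the remaining three by the already-established first inequality (e.g.\ $\ord([\tilde A,\partial^m])\le (n-2)+m-1=n+m-3$), so no Leibniz bookkeeping is needed at all. The paper's decomposition is shorter and more conceptual; your computation costs index care but yields strictly more information --- explicit top coefficients, such as the $\partial^{n+m-2}$ coefficient $a_{n-2}+b_{m-2}$ of $AB$ --- which is precisely the kind of fact the paper must rederive separately elsewhere (e.g.\ the leading term $na_m'\partial^{n+m-1}$ of $[L,A]$ used in the proof of Lemma~\ref{lem-WL}). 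One cosmetic point: for small $n$ or $m$ some of your triples are vacuous and coefficients like $a_{n-2}$ must be read as $0$, but the standard conventions (and $\binom{i}{k}=0$ for $k>i$) cover this.
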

\begin{proof}
Since $\lc(AB) = \lc(A)\lc(B) = \lc(BA)$, the term of order $n+m$ of
$[A,B]$ is always zero, implying $\ord([A,B])\leq n+m-1$. Moreover,
if $A$ and $B$ are in normal form, we can write $A = \partial^n +
\tilde{A}$ and $B = \partial^m + \tilde{B}$ where $\tilde{A}$ and
$\tilde{B}$ have orders bounded by $n-2$ and $m-2$ respectively.
Then
\[[A,B] = [\partial^n, \partial^m] + [\tilde{A}, \partial^m] + [\partial^n, \tilde{B}] + [\tilde{A}, \tilde{B}].\]
The first bracket always vanishes and the orders of the other three
brackets are bounded by $n+m-3$.
\end{proof}

Let us denote by $R((\partial^{-1}))$ the ring of
pseudo-differential operators in $\partial$ with coefficients in the
differential ring $R$, defined as in~\cite{Good}  {( see also
\cite{Olver}, Chapter 5 and \cite{Dikii})}
\[
R((\partial^{-1}))=\left\{
{ \sum_{-\infty < i \leq n}} a_i\partial^i\mid a_i\in  R ,
n\in\bbZ\right\},
\]
where $\partial^{-1}$ is the inverse of $\partial$ in
$R((\partial^{-1}))$,$\partial^{-1}\partial=\partial\partial^{-1}=1$,
and it satisfies the following commutation rule with elements $r \in
R$
$$\partial^{-1} r = \sum_{i=1}^{\infty} (-1)^{i-1}  \partial^{i-1}(r) \;\partial^{-i} \ .$$

Observe that
\[\centr(L)\subset R[\partial]\subset R((\partial^{-1})).\]
Given $A\in R((\partial^{-1}))$, with $a_n\neq 0$ then $n$ is called
the \emph{order} of $A$, denoted by $\ord(A)$ and $a_n$ is called
the \emph{leading coefficient} of $A$. The \emph{positive part} of
$A$ is defined as the differential operator
\begin{equation}
   A_+= \sum_{i=0}^n a_i\partial^i\in R[\partial] \ \textrm{ if } n\geq 0 \ , \ \textrm{ and } A_+=0 \ \textrm{ otherwise  }.
\end{equation}

As it is ususal, denote by $A_- = A - A_+$. Given $A,B\in
R((\partial^{-1}))$ of orders $m$ and $n$ respectively,  their
commutator $[A,B]$ is also defined and using the decomposition $A =
A_+ + A_-$, we can see that
\[[A, B] = [A_+,B_+] + [A_-, B_+] + [A_+, B_-] + [A_-,B_-],\]
which easily proves that Lemma~\ref{lem:commutator_order} extends to
pseudo-differential operators, see for instance \cite{Good}.

\section{Almost Commuting Operators}\label{sec:wilson}

Let us consider a differential ring $(R,\partial)$ whose ring of
constants $\coC$ is a field of zero characteristic.

Given a monic $n$-th order differential operator $L$ in
$R[\partial]$, it is well known that it has a unique monic $n$-th
root, {  which is the first order pseudo-differential operator with
leading term $\partial$}, denoted by $L^{1/n}$,  see for instance {
~\cite[Proposition~1.3.8]{Dikii} and ~\cite[Proposition~2.7]{Good}}.
In addition, $L^{1/n}$ determines the centralizer in
$R((\partial^{-1}))$ of $L$, denoted by $\centr((L))$ and equal to
\begin{equation}\label{eq-SchurThm}
    \centr((L))=\left\{ \sum_{-\infty<j\leq n} c_j (L^{1/n})^j\mid c_j\in {\bf C}, j\in\bbZ\right\},
\end{equation}
 by \cite[Theorem~3.1]{Good}.
This implies that $\centr((L))$ is commutative. This result is a
generalization of a famous theorem by I. Schur in \cite{Sch},
originally stated for analytic coefficients, to differential
operators with coefficients in a differential ring $R$. This theorem
has a long history, see for instance~\cite[Sections~3 and~4]{Good},
and the references therein.

An immediate consequence of \eqref{eq-SchurThm}, given in the next
lemma, motivates the definition of almost commuting operators,
originally introduced by G. Wilson in \cite{SW, Wilson1985}. Let us
consider the set $\centr ((L))_+$ of all positive parts of the
pseudo differential operators in $\centr ((L))$, that is
\begin{equation}\label{eq-cen+}
    \centr ((L))_+:=\{ {B_+\mid B}\in \centr ((L))\}.
\end{equation}
Observe that $ \centr ((L))_+ $ is a $\coC$-linear space.
\begin{lemma}\label{lema-order-bracket}
    Let us consider a monic differential operator $L\in R[\partial]$ of order~$n$. For every  $A\in \centr ((L))_+$ it holds that $\ord([L,A])\leq n-2$.
\end{lemma}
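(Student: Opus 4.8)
The plan is to exploit the defining property of the centralizer together with the order estimate for commutators of pseudo-differential operators. Let $A \in \centr((L))_+$. By the definition in \eqref{eq-cen+}, there is some $B \in \centr((L))$ with $A = B_+$, so that $[L,B]=0$. I would then split $B$ into its positive and negative parts, $B = B_+ + B_-$, where by construction the negative part $B_-$ is a pseudo-differential operator all of whose terms carry strictly negative powers of $\partial$; hence $\ord(B_-)\leq -1$.

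The key step is to observe that the vanishing of $[L,B]$ lets me trade the positive part for the negative part. Since the commutator is $\coC$-bilinear, $0 = [L,B] = [L,B_+] + [L,B_-]$, and therefore $[L,A] = [L,B_+] = -[L,B_-]$. The point is that estimating $\ord([L,B_+])$ head-on is useless, as it only gives the bound $2n-1$ coming from Lemma~\ref{lem:commutator_order} (since $B_+$ may have order up to $n$); the identity instead reroutes the computation through $B_-$, whose very low order produces the sharp bound we want.

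Finally I would apply the commutator order estimate, extended to pseudo-differential operators as noted just after Lemma~\ref{lem:commutator_order}: with $\ord(L)=n$ and $\ord(B_-)\leq -1$, it yields $\ord([L,B_-]) \leq n + (-1) - 1 = n-2$. Combining this with $[L,A] = -[L,B_-]$ gives $\ord([L,A])\leq n-2$, as claimed; the degenerate cases $A=0$ or $B_-=0$ are immediate, since then the relevant commutator is $0$ and has order $-\infty$. The only real obstacle is conceptual, namely the recognition that one must not bound $[L,B_+]$ directly but instead use the centralizer relation to pass to $B_-$. Once that move is made, the estimate follows at once from the low order of the negative part and the extended form of Lemma~\ref{lem:commutator_order}.
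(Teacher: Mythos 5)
Your proposal is correct and follows essentially the same route as the paper's proof: write $A = B_+$ for $B \in \centr((L))$, use $0 = [L,B] = [L,B_+] + [L,B_-]$ to trade $[L,A]$ for $-[L,B_-]$, and bound the order via the extension of Lemma~\ref{lem:commutator_order} to pseudo-differential operators with $\ord(B_-) \leq -1$. Your explicit remark that bounding $[L,B_+]$ directly would be useless makes the key idea clearer than the paper's terser phrasing, but the argument is identical.
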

\begin{proof}
    { Given $A\in \centr ((L))_+$ then $A=B_+$, with $B\in \centr ((L))$}. Let us consider the natural decomposition $B=B_+ + B_-$ and observe that $B_-$ is a pseudo-differential operator of order at most $-1$. Since $B\in\centr((L))$ then $0=[L,B]=[L,B_+]+[L,B_-]$ implying that the order of $[L,B_+]$ equals the order of $[L,B_-]$. This order is less than or equal to $n-2$.
\end{proof}

The next definition is naturally motivated by the previous lemma.

\begin{definition}[\cite{Wilson1985}]
    Let $L \in R[\partial]$ be a differential operator of order $n$. We say that $A \in R[\partial]$ \emph{almost commutes with $L$} if $\ord([L,A]) \leq n-2$. We denote by $W(L)$ the set of all differential operators that almost commute with $L$.
\end{definition}

\begin{lemma}\label{lem-WL}
    Let $(R,\partial)$ be a differential ring, whose ring of constants is a field~$\coC$ of zero characteristic. Given a differential operator $L\in R[\partial]$ the following statements hold.
    \begin{enumerate}
        \item $W(L)$ is a $\coC$-vector space.

        \item Given $A\in W(L)$ then $\lc (A)\in \coC$.
    \end{enumerate}

\end{lemma}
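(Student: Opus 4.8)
The plan is to derive both statements from the $\coC$-bilinearity of the bracket and the elementary behaviour of the order under sums and products. For part~(1), I would verify directly that $W(L)$ is a $\coC$-subspace of $R[\partial]$: since $[L,0]=0$ has order $-\infty\leq n-2$, the zero operator lies in $W(L)$, and for $A,B\in W(L)$ and $c,d\in\coC$ bilinearity gives $[L,cA+dB]=c[L,A]+d[L,B]$. As the order of a sum is bounded by the larger of the two orders and multiplication by a constant does not raise the order, we obtain $\ord([L,cA+dB])\leq\max\{\ord([L,A]),\ord([L,B])\}\leq n-2$, so $cA+dB\in W(L)$. This part is pure bookkeeping and presents no real obstacle.

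For part~(2), let $A\in W(L)$ be nonzero of order $m\geq 0$, so that $\lc(A)=a_m$ is defined. Using that $L$ is monic (as assumed throughout this section), I would write $L=\partial^n+\tilde L$ and $A=a_m\partial^m+\tilde A$ with $\ord(\tilde L)\leq n-1$ and $\ord(\tilde A)\leq m-1$, and split $[L,A]=[\partial^n,A]+[\tilde L,A]$. By Lemma~\ref{lem:commutator_order} the summand $[\tilde L,A]$ has order at most $n+m-2$, so the coefficient of $\partial^{n+m-1}$ in $[L,A]$ is supplied entirely by $[\partial^n,A]$. Expanding $\partial^n A-A\partial^n$ with the rule $\partial a=\partial(a)+a\partial$, the zero-derivative terms cancel and the top-order survivor is $n\,\partial(a_m)\,\partial^{n+m-1}$; hence the coefficient of $\partial^{n+m-1}$ in $[L,A]$ equals $n\,\partial(a_m)$.

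Since $A\in W(L)$ forces $\ord([L,A])\leq n-2$ while $n+m-1\geq n-1>n-2$, this coefficient must vanish, giving $n\,\partial(a_m)=0$; because $\coC$ has characteristic zero, the integer $n\geq 1$ is invertible in $\coC$, so $\partial(a_m)=0$ and $\lc(A)=a_m\in\coC$. The only point requiring care, and the sole place where both the monic normalization and the characteristic-zero assumption are genuinely used, is the isolation of the coefficient of $\partial^{n+m-1}$: one must be certain that no order-$(n+m-1)$ contribution is concealed inside $[\tilde L,A]$, which is exactly what Lemma~\ref{lem:commutator_order} rules out.
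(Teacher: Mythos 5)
Your proof is correct and follows essentially the same route as the paper: part~(1) is the same bilinearity-plus-order-bounds argument, and part~(2) identifies the coefficient of $\partial^{n+m-1}$ in $[L,A]$ as $n\,\partial(a_m)$ and concludes $\partial(a_m)=0$ using characteristic zero, exactly as the paper does, with your decomposition $L=\partial^n+\tilde L$ and appeal to Lemma~\ref{lem:commutator_order} simply filling in the step the paper dismisses as ``easily proved.'' You were also right to flag monicity of $L$ explicitly: although the lemma's statement omits it, the paper's computation of the leading term uses it tacitly (for non-monic $L$ the coefficient would be $n\,\lc(L)\,\partial(a_m)-m\,a_m\,\partial(\lc(L))$), so your version is, if anything, slightly more careful.
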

\begin{proof}
    For every $B_1,B_2 \in W(L)$ and $c\in \coC$ using the $\coC$-linearity of the Lie bracket:
    \[[L, cB_1 + B_2] = c[L, B_1] + [L,B_2].\]
    Thus, by the properties of the order, $cB_1 + B_2 \in W(L)$, proving the first statement.

Given $A\in W(L)$, let us prove that $\lc(A)$ is a constant. If
$\ord(A)=m$, let $a_m=\lc(A)$. It is easily proved that the leading
term of $[L,A]$ is $n a_m' \partial^{n+m-1}$. Since $\ord([L,A])\leq
n-2$ then $a_m'=0$, proving that $a_m\in \coC$.
\end{proof}

Lemma \ref{lema-order-bracket}  implies that $\centr
((L))_+\subseteq W(L)$. We will next prove that equality holds. {
Given the monic  $n$-th root} $L^{1/n}$ of $L$ we denote its $m$-th
power by $L^{m/n}$.

\begin{prop}\label{prop-Cbasis}
   Let us consider a monic differential operator $L\in R[\partial]$ of order $n$ and  { its monic $n$-th root} $L^{1/n}$. Then the set $\cB(L)=\{B_j=(L^{j/n})_+\mid  j\in \bbN\}$ is a basis of $\centr ((L))_+$ as a $\coC$-vector space. 
\end{prop}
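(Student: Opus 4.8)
The plan is to show that $\mathcal{B}(L) = \{B_j = (L^{j/n})_+ \mid j \in \mathbb{N}\}$ spans $\centr((L))_+$ and is linearly independent over $\coC$. The spanning part follows directly from the Schur description in equation~\eqref{eq-SchurThm}: every element of $\centr((L))$ is a $\coC$-linear combination $\sum_{j} c_j (L^{1/n})^j$, and taking positive parts is a $\coC$-linear operation, so every element of $\centr((L))_+$ is a $\coC$-linear combination of the $(L^{j/n})_+ = B_j$. The only subtlety here is bookkeeping with negative powers: the sum in~\eqref{eq-SchurThm} ranges over $-\infty < j \leq n$, but for $j < 0$ the operator $L^{j/n}$ has negative order, so $(L^{j/n})_+ = 0$ by the definition of the positive part, and these terms contribute nothing. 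Hence only the nonnegative indices $j \in \mathbb{N}$ survive, which is exactly the index set of $\mathcal{B}(L)$.

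For linear independence, the key observation is that $L^{1/n}$ is a first-order pseudo-differential operator with leading term $\partial$, so $L^{j/n}$ has order exactly $j$ with leading coefficient $1$, and therefore $B_j = (L^{j/n})_+$ is an operator of order exactly $j$ in normal form (monic, with vanishing $\partial^{j-1}$ coefficient, since $L$ is monic and in normal form). First I would note that operators of distinct orders are automatically linearly independent: a nontrivial finite relation $\sum_{j \in F} c_j B_j = 0$ with $c_j \in \coC$ and $F \subset \mathbb{N}$ finite, upon taking the largest index $j_0 \in F$ with $c_{j_0} \neq 0$, forces the order-$j_0$ term to vanish, but that term is $c_{j_0}\partial^{j_0}$, contradicting $c_{j_0} \neq 0$. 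Thus the $B_j$ are $\coC$-linearly independent.

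The main point to verify carefully, which I expect to be the only genuine obstacle, is the claim that $B_j$ has order exactly $j$ rather than order less than $j$. This requires knowing that $L^{j/n}$ genuinely has order $j$ and nonzero leading coefficient, which is immediate from $\ord(L^{1/n}) = 1$ and the multiplicativity of order under composition of pseudo-differential operators, and that for $j \geq 0$ the positive part preserves this leading term. I would therefore structure the proof as: (i) invoke~\eqref{eq-SchurThm} and the $\coC$-linearity of $A \mapsto A_+$ to get spanning, discarding the negative-order terms; (ii) establish $\ord(B_j) = j$ with $\lc(B_j) = 1$; and (iii) deduce linear independence from the distinctness of orders. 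Together these show $\mathcal{B}(L)$ is a $\coC$-basis of $\centr((L))_+$.
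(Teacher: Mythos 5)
Your proof is correct and follows essentially the same route as the paper's: spanning comes from the Schur description \eqref{eq-SchurThm} together with \eqref{eq-cen+} (with the negative powers killed by the positive-part map), and linear independence comes from $\ord(B_j)=j$ with leading coefficient $1$; you merely spell out details the paper's one-line proof leaves implicit. One small caveat: your parenthetical claim that each $B_j$ is in normal form invokes $L$ being in normal form, which this proposition does not assume (only monicity), but since your argument never uses that claim, nothing breaks.
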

\begin{proof}
    The existence of $B=L^{1/n}$ is guaranteed by \cite{Good}, Proposition 2.7. Note that $B$ has order one. Thus $B_0=1$ and  $\ord(B_j)=j$, for every $j\in\bbN\backslash\{0\}$. Thus $\cB(L)$ is a set of linearly independent differential operators and, by \eqref{eq-cen+}  and  \eqref{eq-SchurThm}, it is a generating set of $\centr ((L))_+$ as a $\coC$-vector space.
\end{proof}

\begin{thm}\label{thm-pcen}
     Let $L\in R[\partial]$ be monic. Then $\centr ((L))_+= W(L)$.
\end{thm}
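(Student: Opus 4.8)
The plan is to prove the reverse inclusion $W(L) \subseteq \centr((L))_+$, since Lemma~\ref{lema-order-bracket} already supplies $\centr((L))_+ \subseteq W(L)$. The strategy is to peel off the leading term of an arbitrary almost commuting operator using the basis $\cB(L) = \{B_j\}$ produced by Proposition~\ref{prop-Cbasis}, and to argue by induction on the order.

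First I would record the two facts that drive the argument. By Proposition~\ref{prop-Cbasis}, for each $m \in \bbN$ the operator $B_m = (L^{m/n})_+$ lies in $\centr((L))_+ \subseteq W(L)$ and has order exactly $m$ (with $B_0 = 1$); moreover it is monic, since $L^{1/n}$ has leading term $\partial$, so $L^{m/n}$ has leading term $\partial^m$ and therefore $\lc(B_m) = 1$. Second, by Lemma~\ref{lem-WL}(2), every $A \in W(L)$ satisfies $\lc(A) \in \coC$. This last point is the crucial one: it guarantees that the coefficient we subtract off is a genuine constant, so the correction stays inside the $\coC$-span of $\cB(L)$.

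Then I would run the induction on $\ord(A)$ for $A \in W(L)$. For the base case, if $\ord(A) \leq 0$ then either $A = 0 \in \centr((L))_+$, or $A = a_0 \in R$ with $a_0 = \lc(A) \in \coC$, whence $A = a_0 B_0 \in \centr((L))_+$. For the inductive step, suppose $\ord(A) = m \geq 1$ and set $c_m = \lc(A) \in \coC$. Because $W(L)$ is a $\coC$-vector space (Lemma~\ref{lem-WL}(1)) and $B_m \in W(L)$, the operator $A - c_m B_m$ again lies in $W(L)$; since both $A$ and $c_m B_m$ have leading term $c_m \partial^m$, the difference has order strictly less than $m$. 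By the induction hypothesis $A - c_m B_m \in \centr((L))_+$, and hence $A = c_m B_m + (A - c_m B_m) \in \centr((L))_+$, using that $\centr((L))_+$ is a $\coC$-linear space.

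I expect the only delicate point to be confirming that the leading coefficients of elements of $W(L)$ are truly constants, which is precisely Lemma~\ref{lem-WL}(2); this is what keeps each subtraction inside $\centr((L))_+$ rather than introducing a non-constant coefficient and derailing the induction. Everything else reduces to bookkeeping with orders and the $\coC$-bilinearity of the Lie bracket, so no further obstacle should arise.
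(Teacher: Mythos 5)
Your proof is correct and follows essentially the same route as the paper: induction on the order of $A \in W(L)$, subtracting $c_m B_m$ with $B_m = (L^{m/n})_+$ from Proposition~\ref{prop-Cbasis}, where Lemma~\ref{lem-WL} guarantees $c_m = \lc(A) \in \coC$ and Lemma~\ref{lema-order-bracket} gives the inclusion $\centr((L))_+ \subseteq W(L)$. You merely spell out details the paper leaves implicit, such as the base case and the monicity of $B_m$.
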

\begin{proof}
Assuming that $L$ has order $n$, we consider the $\coC$-basis
$\cB(L)=\{B_j=(L^{j/n})_+\mid  j\in \bbN\}$ of $\centr ((L))_+$ as
in Proposition \ref{prop-Cbasis}. By Lemma \ref{lem-WL}, given $A\in
W(L)$ of order $m$ then there exists $c_m\in \coC$ such that $A-c_m
B_m$ belongs to $W(L)$ and has order less than or equal to $m-1$.
Now by induction on the order of $A$, we conclude  that $A\in \centr
((L))_+$.
\end{proof}

\color{black}

\subsection{Formal differential operators}\label{sec-formal-operators}

Let us consider a set $U=\{u_2,\ldots u_n\}$ of differential
variables with respect to a derivation $\partial$ and the ring of
differential polynomials $\cR=\coC\{U\}$,  with the extended
derivation
\begin{equation}\label{def-w-cR}
    \partial^q (u_\ell ) =u_\ell^{(q)} , \quad q=1, 2, \dots .
\end{equation}
Then $\cR$ is a differential ring whose ring of constants equals
$\coC$. We call $\cR[\partial ]$ {\it the ring of formal
differential operators in the differential variables $U$} and its
elements {\it formal differential operators}.

\medskip


\begin{definition}
({\bf Weight function}) We define a weight function $w$ on $\cR$
given by $w(u_{\ell}) = \ell$, and
\begin{equation}\label{eq-weight}
w(u_\ell^{(k)})=\ell+{ k} , \ k=1, 2, \dots .
\end{equation}
As in \cite[Proposition~4.2]{SW}, we call an operator $P\in
\cR[\partial]$ \emph{homogeneous of weight $r$ with respect to $w$}
if the coefficient of $\partial^i$ in $P$ has weight $r-i$. This is
equivalent to extending $w$ to $\cR[\partial]$ by setting
$w(\partial) = 1$. From the homogeneity of the commutation rule
$\partial p= p\partial +\partial (p)$, for every $p\in \cR$ it
follows that the product of two operators that are homogeneous of
weights $r$ and $s$ is homogeneous of weight $r + s$.
\end{definition}

\medskip

Let $L$ be the formal differential operator in normal form
\begin{equation}\label{def-L-formal-U}
 L = \partial^n + u_2 \partial^{n-2} + \ldots + u_{n-1}\partial + u_n \in \cR[\partial],
\end{equation}
which is homogeneous of weight $n$.

{
\begin{rem}\label{rem-Q}
    An $n$-th root of $L$ in $\cR ((\partial^{-1}))$ is an order one pseudo-differential operator $\xi \partial +q_0+ q_{-1} \partial^{-1}+\cdots$, where $\xi$ is an $n$-th root of unity. We work with the unique $Q$ starting with $\partial$ such that $Q^n=L$. The coefficient of the term of order $n-1$ of $Q^n$ is $n q_0$. Since $L$ is in normal form, then the term of order $n-1$ of $L$ is zero, implying that $q_0=0$. Thus $Q$ is also in normal form
\begin{equation}\label{eq-Q}
Q=\partial +q_{-1} \partial^{-1}+\cdots.
\end{equation}
\end{rem}
}

The next theorem is our reformulation
of~\cite[Proposition~2.4]{Wilson1985}, which establishes the
foundation of the contributions of this paper.

\begin{theorem}\label{thm-basis}
Let $L$ be as in \eqref{def-L-formal-U}, and let $Q$ be its unique
monic $n-$th root in $\cR ((\partial^{-1}))$. Then,
    the  family of formal differential operators in $\cR[\partial]$
    \begin{equation}\label{def-basis-Pm}
        \cH(L)=\{P_j=(Q^j)_+\mid j\in \bbN\}
    \end{equation}
    is the unique $\coC$-basis of $W(L)$, up to multiplication by constants, verifying the following \emph{homogeneity condition}: each $P_j$ is a formal differential operator homogeneous of weight $j$. In addition, each $P_j$ is monic and in normal form. 
\end{theorem}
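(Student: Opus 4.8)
The plan is to leverage what the excerpt has already established and reduce the statement to three remaining claims: homogeneity, normal form, and uniqueness. First I would observe that since $Q=L^{1/n}$ is the unique monic $n$-th root, one has $Q^j=L^{j/n}$ and hence $P_j=(Q^j)_+=(L^{j/n})_+=B_j$, so the family $\cH(L)$ literally coincides with the basis $\cB(L)$ of $\centr((L))_+$ from Proposition~\ref{prop-Cbasis}. Combined with the identity $\centr((L))_+=W(L)$ of Theorem~\ref{thm-pcen}, this already yields that $\cH(L)$ is a $\coC$-basis of $W(L)$. What remains is genuinely new: the weight homogeneity of each $P_j$, the normal-form assertion, and uniqueness up to constants.

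The core step will be to show that $Q$ itself is homogeneous of weight $1$. To this end I would introduce the scaling action attached to the weight function: extending scalars to $\coC(t)$, define $\sigma_t$ on $\cR((\partial^{-1}))$ by $\sigma_t(\partial)=t\partial$ and $\sigma_t(u_\ell^{(k)})=t^{\ell+k}u_\ell^{(k)}$. A short check that $\sigma_t$ respects the Ore rule $\partial p=p\partial+\partial(p)$ shows it extends to a ring automorphism, and by construction an operator $P$ is homogeneous of weight $r$ exactly when $\sigma_t(P)=t^rP$. Since $L$ is homogeneous of weight $n$, we get $\sigma_t(Q)^n=\sigma_t(Q^n)=\sigma_t(L)=t^nL$, so $t^{-1}\sigma_t(Q)$ is again a monic $n$-th root of $L$ (its leading term is $\partial$ because $\sigma_t(\partial)=t\partial$). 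Uniqueness of the monic root then forces $t^{-1}\sigma_t(Q)=Q$, i.e.\ $\sigma_t(Q)=tQ$, so $Q$ is homogeneous of weight $1$. Because products of homogeneous operators add weights and passage to the positive part only truncates terms, $Q^j$ and hence $P_j=(Q^j)_+$ are homogeneous of weight $j$. (Equivalently, one could argue by induction on the coefficients $q_{-i}$ of $Q$ directly from $Q^n=L$, but the automorphism packages the whole series at once.)

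For the normal-form claim, note that by Remark~\ref{rem-Q} we have $Q=\partial+q_{-1}\partial^{-1}+\cdots$ with leading term $\partial$, so $Q^j$ has leading term $\partial^j$ and $P_j$ is monic of order $j$. To see that the coefficient of $\partial^{j-1}$ vanishes I would use weight bookkeeping: by homogeneity of weight $j$ that coefficient is a homogeneous element of $\cR$ of weight $1$; but every generator $u_\ell^{(k)}$ has weight $\ell+k\ge 2$, so every nonconstant monomial has weight $\ge 2$ while constants have weight $0$, and $\cR$ therefore contains no nonzero element of weight $1$. Hence the coefficient is $0$ and each $P_j$ is in normal form.

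Finally, for uniqueness up to constants I would use the weight grading $\cR[\partial]=\bigoplus_r\cR[\partial]_r$. If $\{P_j'\mid j\in\bbN\}$ is any $\coC$-basis of $W(L)$ with each $P_j'$ homogeneous of weight $j$, expand $P_j'=\sum_k c_kP_k$ in the basis $\cH(L)$; since $P_k\in\cR[\partial]_k$ and $P_j'\in\cR[\partial]_j$, comparing graded components forces $c_k=0$ for $k\neq j$, so $P_j'=c_jP_j$ with $c_j\in\coC$ nonzero. The step I expect to be the main obstacle is the homogeneity of $Q$: it is the only place where real information about the $n$-th root enters, and once it is secured both the normal-form and the uniqueness statements reduce to the elementary observation that $\cR$ has no weight-$1$ elements and that its operators are graded.
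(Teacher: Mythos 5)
Your proof is correct, and it follows the paper's overall architecture --- basis property from Proposition~\ref{prop-Cbasis} combined with Theorem~\ref{thm-pcen} (a reduction the paper's proof leaves implicit and you make explicit via $P_j=(L^{j/n})_+=B_j$), with everything then hinging on $Q$ being homogeneous of weight $1$ --- but you justify the two key sub-steps by genuinely different means. For the crux, the paper simply cites \cite[Proposition~4.2]{SW}; you instead prove it with the scaling automorphism $\sigma_t$, $\sigma_t(\partial)=t\partial$, $\sigma_t(u_\ell^{(k)})=t^{\ell+k}u_\ell^{(k)}$: one checks $\sigma_t\circ\partial=t\,\partial\circ\sigma_t$ on generators, so $\sigma_t$ is compatible with the Ore rule and with the commutation rule for $\partial^{-1}$, and then $\bigl(t^{-1}\sigma_t(Q)\bigr)^n=t^{-n}\sigma_t(L)=L$ forces $\sigma_t(Q)=tQ$ by uniqueness of the monic root starting with $\partial$ --- which does hold over $\coC(t)\{U\}$, since its constants still form a field of characteristic zero, so \cite[Proposition~2.7]{Good} applies verbatim. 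This replaces the paper's external citation by an argument closed within its own toolkit. For the normal form, the paper routes through Remark~\ref{rem-Q}, extracting $q_0=0$ by comparing the coefficient of $\partial^{n-1}$ in $Q^n=L$; your observation that $\cR=\coC\{u_2,\ldots,u_n\}$ contains no nonzero homogeneous element of weight $1$ kills the coefficient of $\partial^{j-1}$ in every $P_j$ at once, re-derives the Remark as a byproduct (since $q_0$ itself has weight $1$), and isolates exactly where the normal form of $L$ enters: it is encoded in the absence of a weight-$1$ generator $u_1$, without which the vacuity argument would fail. Your uniqueness step via the grading $\cR[\partial]=\bigoplus_r\cR[\partial]_r$ is the paper's argument made precise, the paper saying only that $T_j=c_jP_j$ ``since otherwise it would not be homogeneous anymore''. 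In short: same skeleton, but your scaling-symmetry plus weight-vacuity arguments buy self-containedness and a cleaner conceptual accounting, at the cost of a somewhat longer proof than the paper's citation-based one.
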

\begin{proof}
We say that a pseudo-differential operator in $\cR((\partial^{-1}))$
is homogeneous of  weight $r$ if the coefficient of $\partial^i$ is
homogeneous of weight $r-i$, with $i\in \bbZ$. In
$\cR((\partial^{-1}))$, it also holds that the product of two
operators that are homogeneous of weights $r$ and $s$ is homogeneous
of weight $r + s$. It follows immediately that if a
pseudo-differential operator is homogeneous of weight $r$ then its
positive part is also homogeneous of weight $r$.

By \cite[Proposition~4.2]{SW}, the unique monic $n$-th root
$Q=L^{1/n}$ of $L$ is a pseudo-differential operator in
$\cR((\partial^{-1}))$ and it is homogeneous of weight $1$.
Therefore $Q^j$ is homogeneous of weight $j$ and we conclude that \
$P_j=(Q^j)_+$ \break is homogeneous of weight $j$.  Since $L$ is
monic and in normal form, then { by Remark \ref{rem-Q}} so is $Q$,
which implies that each $P_j$ is monic and in normal form.

To prove its uniqueness, let us consider any other $\coC$-basis
$\mathcal{T} = \{T_j\mid j\in\bbN\}$ of $W(L)$ satisfying the
homogeneity condition. It is necessary that $T_j=c_j P_j$, with
$c_j\in \coC$, since otherwise it would not be homogeneous anymore.
If we  require every element of $\cT$ to be monic, we  conclude that
$c_j = 1$ for all $j \in \bbN$, proving $\mathcal{T} = \cH(L)$.
\end{proof}

We call $\cH (L)$, given in Theorem \ref{thm-basis}, {\it the
homogeneous basis} of $W (L)$. Let us denote by $W_M(L)$ the set of
all operators that almost commute with $L$ and have order less than
or equal to $M$. Observe that $W_M(L)$ is a $\coC$-linear subspace
of $W(L)$ and
\begin{equation}
    \cH_M (L)=\{P_1,\ldots ,P_M\}
\end{equation}
is a $\coC$-basis of $W_M(L)$. Our goal is to compute the basis
$\cH_M(L)$.

\begin{ex}\label{ex-n3}
Let us illustrate the homogeneous almost commuting basis in the
case~$n=3$, that leads to the~\emph{Boussinesq} hierarchy, which
will be studied in Section \ref{sec-Third}. Consider a~\emph{formal}
third-order differential operator
\begin{equation}\label{eq-Bsq-L3}
 L_3 =\partial^3 +u_2 \partial +u_3.
\end{equation}
Its monic cubic root is the pseudo-differential operator in $\cR
((\partial^{-1}))$
\begin{equation}\label{eq-cubic-root}
Q_3=L_3^{1/3}=\partial+\frac13 u_2 \partial^{-1}+\frac13
\left(u_3-u_2'\right)\partial^{-2}+\cdots
\end{equation}
that can be computed recursively comparing both sides of the
formula~$L_3=Q_3\cdot Q_3\cdot Q_3$.

The positive part of the pseudo-differential operator $Q_3$ is
$P_1=\partial$. The remaining operators of the almost commuting
homogeneous basis $\cH(L_3)$ of $L_3$ are
\begin{equation}\label{eq-Ai-GD}
 P_j:= (Q_3^j)_+ \ ,   \textrm{ with } j\geq 2.
\end{equation}
 The differential operator $P_j$ defined in \eqref{eq-Ai-GD} is a monic operator of order $j$ whose coefficient in $\partial^{j-1}$ is $0$. For $j \equiv  0\,(\mod\ 3)$ observe that $P_j=L_3^j$.
\end{ex}

In Section~\ref{sec:almost}, we provide  a deterministic algorithm
to compute the sequence ~$\{P_j\}$ defined in \eqref{def-basis-Pm}
up to a fixed bound $M$, for any formal differential operator of
order $n$ as in~\eqref{def-L-formal-U}. Our philosophy is to avoid
computations in the ring of pseudo-differential operators $\cR
((\partial^{-1}))$, since the formal implementation of $\cR
((\partial^{-1}))$ in a computational algebra system is not
available so far.

\section{Basis of almost-commuting ODOs}\label{sec:almost}

Let us consider a differential field of constants $(\coC,\partial)$
and assume that $\coC$ has zero characteristic, thus $\coC$ contains
(a field isomorphic to) the field of rational numbers $\bbQ$. Let
$L$ be the formal differential operator considered
in~\eqref{def-L-formal-U}
\begin{equation*}
 L = \partial^n + u_2 \partial^{n-2} + \ldots + u_{n-1}\partial + u_n \in \cR[\partial],
\end{equation*}
which belongs to $\bbQ\{U\}[\partial ]\subset \cR[\partial ]$, with
$\cR = \coC \{ U \}$ and $U = \{u_2,\ldots, u_n\}$. Let $\cBa (L)$
be the set of almost-commuting operators with $L$, and  $\cBa_M (L)$
its $\coC$-linear subspace whose operators have order $\leq M$.
Observe that $\cBa_M (L)$ is also a set of formal differential
operators in $\cR[\partial ]$. We compute next a $\coC$-basis of
$\cBa_M (L)$ as a consequence of the following construction.
\medskip

\begin{definition}
({\bf Extended weight function}) Let us define a new set of
differential variables $Y =\{ y_2 , \dots , y_M \}$ over $\cR$. Let
$Y_m$ denote $\{y_2,\ldots,y_m\}$ for $m=2,\ldots,M$. Extending the
derivation in $\cR$ formally by
\[
\partial^k ( y_\ell ) = y_\ell^{(k)} , \quad k=1, 2, \dots ,
\]
a new differential ring $\cR\{ Y \}$ can be considered. Given the
weight function $w$ on $\cR$ defined in \eqref{eq-weight}, an
extended weight function $\Tilde{w}$ can be similarly defined over
$\cR\{ Y \}$ by setting:
\begin{equation}
    \Tilde{w} \left( \,y_\ell^{(k)} \right)= \ell +k.
\end{equation}
Abusing notation, we denote this extension by $w$ when there is no
possible confusion.
\end{definition}

\medskip

Let us consider a fixed number $2 \leq m \leq M$. We define the
formal differential operator
\begin{equation}\label{def-Tilde_Pm}
   \tilde{P}_{m} := \partial^m +y_2 \partial^{m-2} + \cdots + y_{m-1 }\partial +y_m\ \in \bbQ\{Y_m\}[\partial]
\end{equation}
which is homogeneous of weight $m$. Consequently, the Lie bracket of
$L$ and $\tilde{P}_{m} $ is homogeneous of weight $n+m$ in
$\bbQ\{Y_m\}[\partial]$. By Lemma \ref{lem:commutator_order}, the
Lie bracket~$[L,\tilde{P}_m]$   provides a formal differential
operator of order { less than or equal to} $n+m-3$
since both operators are in normal form. The next  equation will be
of importance in the remaining parts of this paper:
\begin{equation}\label{def-lie-bracket}
[L,\tilde{P}_m] = E_{m,0}(U, Y_m )  + E_{m,1}(U, Y_m ) \partial +
\ldots + E_{m,n+m-3}(U, Y_m ) \partial^{n+m-3},
\end{equation}
where $E_{m,n+m-i}(U, Y_m)$ is a homogeneous polynomial of weight
$i$. Our aim is to solve the system
\begin{equation}\label{eq-system}
   \bbE_{m,n}:=\{ E_{m,n+m-i}(U, Y_m)=0\mid i=3, \dots ,  {m+1}\}
\end{equation}
w.r.t.~the variables $Y_m$ to obtain an operator of weight $m$ in
$\cR[\partial]$ that almost commutes with $L$ in $\cR[\partial]$,
leading to the element of order $m$ of the homogeneous basis of
$W(L)$ {defined in Theorem \ref{thm-basis}}.

\medskip

For a fixed set of differential polynomials $ Z= \{ q_2 , \dots ,
q_m \} \subset \cR$, with weights $w(q_{\ell})=\ell$, we define a
{differential} ring homomorphism, called  \emph{evaluation  by $Z$}
\begin{equation}
    \epsilon_Z : \cR \{ Y_m \} \rightarrow \cR \ , \
{\epsilon_Z(y_\ell^{(k)})=q_{\ell}^{(k)}},
\end{equation}
{and denote $f(U,Z):=\epsilon_Z   ( f(U,  Y_m ) )$} . Abusing
notation, we define the extension of the previous morphism to the
corresponding rings of differential operators
\begin{equation}
    \epsilon_Z : \cR \{ Y_m \} [\partial ]\rightarrow \cR [\partial ] \ , \
     \epsilon_Z
    \left( \sum_{i=0}^d  f_i \partial^i  \right)=\sum_{i=0}^d \epsilon_Z (f_i )\partial^i   .
\end{equation}

{ The next theorem explains that solving the system $\bbE_{m,n}$ in
\eqref{eq-system} for the set variables $Y_m$ is equivalent to
finding the homogeneous basis $\cH (L)$ of $W(L)$ as in Theorem
\ref{thm-basis}. Moreover, by considering weights, the solution $Z=
\{ q_2 , \dots , q_m \}$ is a unique set of differential polynomials
in $\cR = \coC \{ U \}$.}

\begin{thm}  \label{lem-almost-system}
{ Consider a formal differential operator $L$ of order $n$ in normal
form as in~\eqref{def-L-formal-U}.} Let us fix a positive integer
$m\geq 2$, and the homogeneous ope\-rator $P_m$ in the homogeneous
basis $\cH(L)$ of $W(L)$ (see Theorem \ref{thm-basis}). Let us
consider $Z= \{ q_2 , \dots , q_m \} \subset \cR$ with weights
$w(q_{\ell})=\ell$.
 The following statements are equivalent:
    \begin{enumerate}
    \item \label{thm-cond-1}  $\epsilon_Z ( \tilde{P}_m )=P_m $.

    \item\label{thm-cond-2} $Z$ is a solution of system $\bbE_{m,n}$ { in \eqref{eq-system} w.r.t $Y_m$}, that is
    \[E_{m,n+m-j}(U,Z) =0,\ j=3, \dots ,   \ m+1.\]
    \end{enumerate}
\end{thm}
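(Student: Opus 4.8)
The plan is to prove the two implications separately, using as the single bridge the fact that the evaluation map $\epsilon_Z$ commutes with the Lie bracket. First I would record that the extension of $\epsilon_Z$ to $\cR\{Y_m\}[\partial]$ is a homomorphism of (noncommutative) operator rings: because $\epsilon_Z$ is a \emph{differential} ring homomorphism on coefficients, i.e.~$\epsilon_Z(y_\ell^{(k)}) = q_\ell^{(k)}$ is compatible with $\partial$, it respects the Ore rule $\partial f = f\partial + \partial(f)$, and hence $\epsilon_Z(AB) = \epsilon_Z(A)\epsilon_Z(B)$ for operators $A,B$. Since $L$ has coefficients in $\cR$, the map $\epsilon_Z$ fixes $L$, so that
\[\epsilon_Z([L, \tilde{P}_m]) = [L, \epsilon_Z(\tilde{P}_m)].\]
Reading off \eqref{def-lie-bracket}, the coefficient of $\partial^j$ in $[L,\epsilon_Z(\tilde P_m)]$ is then exactly $E_{m,j}(U,Z)$.

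From here I would isolate the key equivalence: the operator $A := \epsilon_Z(\tilde{P}_m)$ lies in $W(L)$ if and only if $Z$ solves $\bbE_{m,n}$. Indeed, $A$ is monic of order $m$ and in normal form, so by Lemma~\ref{lem:commutator_order} the commutator $[L,A]$ has order at most $n+m-3$; the condition $\ord([L,A])\leq n-2$ thus amounts to the vanishing of the coefficients of $\partial^{n-1}, \dots, \partial^{n+m-3}$, which by the previous paragraph are precisely the $E_{m,n+m-i}(U,Z)$ for $i = 3,\dots,m+1$. This is verbatim the system $\bbE_{m,n}$ of \eqref{eq-system}.

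With this claim in hand, the implication $(\ref{thm-cond-1})\Rightarrow(\ref{thm-cond-2})$ is immediate: if $\epsilon_Z(\tilde{P}_m)=P_m$, then $A=P_m\in W(L)$ by Theorem~\ref{thm-basis}, whence $Z$ solves $\bbE_{m,n}$. For the converse $(\ref{thm-cond-2})\Rightarrow(\ref{thm-cond-1})$, solving $\bbE_{m,n}$ gives $A\in W(L)$, and it remains to identify $A$ with $P_m$. Here I would invoke the weight hypothesis $w(q_\ell)=\ell$: since $\tilde{P}_m$ is homogeneous of weight $m$ and $\epsilon_Z$ sends each variable $y_\ell$ (of weight $\ell$) to a polynomial $q_\ell$ of the same weight $\ell$, the operator $A$ is again homogeneous of weight $m$, and it is monic of order $m$. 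Expanding $A$ in the homogeneous basis $\cH(L)=\{P_j\}$ of Theorem~\ref{thm-basis} as $A=\sum_j c_j P_j$ and comparing weight-$m$ homogeneous components (each $P_j$ being homogeneous of weight $j$) forces $c_j=0$ for $j\neq m$, so $A=c_m P_m$; monicity of both sides then yields $c_m=1$ and $A=P_m$.

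The routine parts (the explicit shape of the $E_{m,j}$, and the verification that $\epsilon_Z$ respects the Ore product) are mechanical. The conceptual crux, and the step I expect to carry the weight of the argument, is this final identification $A=P_m$: it is the \emph{uniqueness} clause of Theorem~\ref{thm-basis}—that $\cH(L)$ is the only homogeneous monic basis of $W(L)$—combined with the weight-preservation of $\epsilon_Z$ that upgrades ``$A$ almost commutes with $L$'' to ``$A$ is exactly the $m$-th homogeneous basis element.'' Everything else follows formally from $\epsilon_Z$ being a derivation-compatible homomorphism fixing $L$.
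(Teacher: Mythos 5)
Your proof is correct and follows essentially the same route as the paper: both rest on the identity $\epsilon_Z([L,\tilde{P}_m]) = [L,\epsilon_Z(\tilde{P}_m)]$ (the paper's observation \eqref{eq-Ebracket}), obtain the forward implication from $P_m \in W(L)$, and settle the converse by noting that $\epsilon_Z(\tilde{P}_m)$ is homogeneous of weight $m$ and invoking the uniqueness clause of Theorem~\ref{thm-basis}. Your only additions --- checking that $\epsilon_Z$ respects the Ore product because it is a differential homomorphism on coefficients, and unpacking the uniqueness step via the graded expansion $\epsilon_Z(\tilde{P}_m)=\sum_j c_j P_j$ with $c_j=0$ for $j\neq m$ --- are correct elaborations of steps the paper leaves implicit.
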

\begin{proof}
The result follows from the next observation
\begin{equation}\label{eq-Ebracket}
     [L,\epsilon_Z (\tilde{P}_m )]
     =\epsilon_Z ( [L,\tilde{P}_m]  ) =
        \sum_{j=3}^{n+m}   E_{m,n+m-j}(U, Z ) \partial^{n+m-j} .
    \end{equation}
If we assume \ref{thm-cond-1}, since $[L, P_m] $ is an operator of
order $n-2$, then  $Z$ is a solution of system \eqref{eq-system}.
Now assume \ref{thm-cond-2}. Then, by \eqref{eq-Ebracket},
$\epsilon_Z (\tilde{P}_m )$ almost commutes with~$L$ and it is an
homogeneous operator of weight $m$, by the weight condition
on~$q_\ell$. Therefore, by the uniqueness in Theorem
\ref{thm-basis}, the operator $\epsilon_Z (\tilde{P}_m )$ equals
$P_m$.
\end{proof}

The existence of a unique solution of system $\bbE_{m,n}$ with
respect to the variables $Y_m$, is guaranteed by Theorem
\ref{lem-almost-system} and Theorem \ref{thm-basis}.

\begin{corollary}\label{cor-uniquesol}
For each $n,m\in \bbN$, $n,m\geq 2$, the system $\bbE_{m,n}$ has a
unique solution $Z=\{ q_2 , \dots , q_m \} \in \bbQ\{U\}$ in the
variables $Y_m$, {with weights $w(q_{\ell})=\ell$}. We call $Z$ the
\emph{weighted solution of $\bbE_{m,n}$}
\end{corollary}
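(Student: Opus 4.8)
The plan is to obtain both existence and uniqueness directly from the equivalence in Theorem~\ref{lem-almost-system} together with the structural description of $\cH(L)$ in Theorem~\ref{thm-basis}, so that essentially no new computation is required. For existence I would start from the operator $P_m \in \cH(L)$ furnished by Theorem~\ref{thm-basis}. Since $P_m$ is monic and in normal form, it can be written uniquely as $P_m = \partial^m + q_2\partial^{m-2} + \cdots + q_{m-1}\partial + q_m$ with coefficients $q_\ell \in \cR$. Because $P_m$ is homogeneous of weight $m$, the coefficient of $\partial^{m-\ell}$ must have weight $m-(m-\ell)=\ell$, so $w(q_\ell)=\ell$ and the candidate set $Z := \{q_2,\ldots,q_m\}$ satisfies the prescribed weight condition. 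By the very definition of $\epsilon_Z$ one has $\epsilon_Z(\tilde P_m) = P_m$, hence the implication \ref{thm-cond-1}~$\Rightarrow$~\ref{thm-cond-2} of Theorem~\ref{lem-almost-system} shows that $Z$ solves $\bbE_{m,n}$ in the variables $Y_m$.

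For uniqueness I would take any weighted solution $Z' = \{q'_2,\ldots,q'_m\}$ of $\bbE_{m,n}$ and apply the reverse implication \ref{thm-cond-2}~$\Rightarrow$~\ref{thm-cond-1} of Theorem~\ref{lem-almost-system}, obtaining $\epsilon_{Z'}(\tilde P_m) = P_m$. The coefficient of $\partial^{m-\ell}$ in $\epsilon_{Z'}(\tilde P_m)$ is precisely $q'_\ell$, while the corresponding coefficient of $P_m$ is $q_\ell$; comparing the two operators coefficientwise forces $q'_\ell = q_\ell$ for every $\ell$, that is $Z' = Z$. Thus the weighted solution is unique, and it coincides with the coefficient set of the basis element $P_m$.

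The only point that needs genuine care—and which I would single out as the main obstacle—is the assertion $Z \in \bbQ\{U\}$, since Theorems~\ref{thm-basis} and~\ref{lem-almost-system} are phrased over $\cR = \coC\{U\}$. To settle this I would trace the recursive construction of the monic $n$-th root $Q = L^{1/n}$: comparing coefficients of decreasing order in $Q^n = L$ determines each coefficient of $Q$ from the previously computed ones by a linear relation in which the unknown occurs with a nonzero rational coefficient—the factor $n$ already visible in Remark~\ref{rem-Q} and in the cubic root $Q_3$ of Example~\ref{ex-n3}. Consequently only $\bbQ$-linear operations and the derivation $\partial$ act on the $u_i$, so every coefficient of $Q$, and hence every coefficient $q_\ell$ of $P_m = (Q^m)_+$, is a differential polynomial in $\bbQ\{U\}$. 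Combined with the existence and uniqueness established above, this gives $Z \in \bbQ\{U\}$ and completes the argument.
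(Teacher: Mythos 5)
Your proof is correct and follows essentially the same route as the paper, which deduces the corollary directly from Theorems~\ref{thm-basis} and~\ref{lem-almost-system}: existence by reading off the coefficients of the homogeneous basis element $P_m$ (monic, in normal form, homogeneous of weight $m$, so $w(q_\ell)=\ell$) as the weighted solution, and uniqueness because any weighted solution $Z'$ satisfies $\epsilon_{Z'}(\tilde P_m)=P_m$, forcing coefficientwise equality. Your extra verification that $Z\subset\bbQ\{U\}$ --- via the $\bbQ$-rational recursion determining the coefficients of $Q=L^{1/n}$ from $Q^n=L$ --- addresses a point the paper leaves implicit and is sound (it could equivalently be obtained from the triangular recursion $q_2=\frac{m}{n}u_2$, $q_i=\frac{1}{n}\partial^{-1}e_{m,i}$ of Section~\ref{sec-integration}).
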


\begin{algorithm}[!ht]
\caption{Algorithm to compute $P_m$}\label{alg:compute_basis}
\Input{Operator $L$ as in~\eqref{def-L-formal-U} and an integer $m \geq 2$}
\Output{Operator $P_m$ in $\cH(L)$ given in Theorem~\ref{thm-basis}}
Define $\tilde{P}_m$\; \texttt{LB} $\gets [L,\tilde{P}_m]$\;
$\bbE_{m,n}$ $\gets \left\{\coeff(\texttt{LB}, \partial^i)\ :\ i \in
\{n-1,\ldots, n+m-3\}\right\}$\; Compute the unique  weighted
solution $Z$ of system $\bbE_{m,n}$ in \eqref{eq-system}\; \Return
$\epsilon_Z (\tilde{P}_m )$\;
\end{algorithm}

The previous results induce  Algorithm ~\ref{alg:compute_basis} to
compute the almost commuting operator $P_m$ of the basis $\cH(L)$.
Iterating Algorithm~\ref{alg:compute_basis}  for $m=2,\ldots,M$ and
setting $P_1 = \partial$, we can compute the homogeneous basis of
$\cBa_M (L)$. We will discuss step \texttt{4} of this algorithm in
the remaining parts of this section.

\subsection{Triangular differential system}

The goal of this section is to prove that the system
\[\bbE_{m,n}=\{E_{m,m+n-i}(U,Y_m)\mid i=3,\ldots ,m+1\}\] is a
triangular system in the set of differential variables
$Y_m=\{y_2,\ldots ,y_m\}$. In addition, each differential polynomial
$E_{m,m+n-i}(U,Y_m)$ will be proved to be homogeneous of  weight
$i$.

Let us observe that we can write the operators $\tilde{P}_m$ of
\eqref{def-Tilde_Pm} recursively as
\begin{equation}\label{def-Pm-recursion}
     \tilde{P}_{\ell +1}  =  \tilde{P}_{\ell }  \partial + y_{\ell +1},\,\,\, \ell \geq 1,\,\, \textrm{ defining }  \tilde{P}_{1} := \partial.
\end{equation}
This induces the following recursive formula to compute the
Lie-brackets $[L,\tilde{P}_m]$.

\begin{lemma}\label{lem:recursive_bracket}
    Let $L$ be as in~\eqref{def-L-formal-U}. Then, for $\ell \geq 1$ \begin{equation}\label{equ:recursive_bracket}
       [L, \tilde{P}_{\ell+1}] =  [L, \tilde{P}_\ell ]\partial + \tilde{P}_\ell [L,\partial] + [L, y_{\ell+1}].
    \end{equation}
\end{lemma}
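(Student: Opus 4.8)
The plan is to reduce everything to two elementary facts: the recursive definition~\eqref{def-Pm-recursion} of $\tilde{P}_{\ell+1}$ and the Leibniz rule for the adjoint action $\operatorname{ad}_L = [L,\,\cdot\,]$. First I would substitute $\tilde{P}_{\ell+1} = \tilde{P}_\ell\,\partial + y_{\ell+1}$ into the left-hand side and use the additivity of the commutator to write
\[
[L, \tilde{P}_{\ell+1}] = [L, \tilde{P}_\ell\,\partial] + [L, y_{\ell+1}],
\]
so that the whole statement reduces to evaluating the single mixed term $[L, \tilde{P}_\ell\,\partial]$.

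The key step is to establish that $\operatorname{ad}_L$ is a derivation with respect to the associative (composition) product in $\cR[\partial]$, that is, $[L, AB] = [L, A]B + A[L, B]$ for all $A,B$. This is immediate from the definition: expanding $[L, AB] = LAB - ABL$ and inserting the telescoping term $\pm\, ALB$ gives
\[
LAB - ABL = (LA - AL)B + A(LB - BL) = [L, A]B + A[L, B].
\]
Applying this identity with $A = \tilde{P}_\ell$ and $B = \partial$ yields $[L, \tilde{P}_\ell\,\partial] = [L, \tilde{P}_\ell]\,\partial + \tilde{P}_\ell\,[L, \partial]$, and substituting this back into the split above produces exactly~\eqref{equ:recursive_bracket}.

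I do not expect any genuine obstacle here: the identity is a purely formal consequence of the derivation property of the commutator combined with the recursion for $\tilde{P}_\ell$. The only point that requires a little care is to work entirely inside the non-commutative ring $\cR[\partial]$ and never to commute $\tilde{P}_\ell$ past $\partial$ or $L$; keeping the factors in their correct order is precisely what guarantees that the term $\tilde{P}_\ell\,[L, \partial]$ appears on the appropriate side. Since $[L, \partial]$ can itself be written out explicitly from the commutation rule $\partial r = \partial(r) + r\partial$, this recursion is directly suited to an inductive computation of the coefficients $E_{m,k}$ appearing in~\eqref{def-lie-bracket}.
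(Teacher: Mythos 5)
Your proposal is correct and follows essentially the same route as the paper: the paper's proof also substitutes the recursion $\tilde{P}_{\ell+1} = \tilde{P}_\ell\,\partial + y_{\ell+1}$ and then inserts the telescoping term $\pm\,\tilde{P}_\ell L \partial$ directly, which is exactly your derivation property $[L,AB]=[L,A]B+A[L,B]$ specialized to $A=\tilde{P}_\ell$, $B=\partial$. Stating the Leibniz rule for $\operatorname{ad}_L$ once in general form is a harmless repackaging of the identical computation.
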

\begin{proof}
    Using the $\bbQ$-linearity of the commutator and formula~\eqref{def-Pm-recursion}, we can compute:
\begin{align*}
    [L, \tilde{P}_{\ell+1}] & = [L, \tilde{P}_{\ell}\partial] + [L, y_{\ell+1}] \\
                         & = L\tilde{P}_{\ell}\partial - \tilde{P}_{\ell}\partial L +  [L, y_{\ell+1}]\\
                         & = L\tilde{P}_{\ell}\partial - \tilde{P}_\ell L \partial + \tilde{P}_\ell L \partial - \tilde{P}_{\ell}\partial L + [L, y_{\ell+1}]\\
                         & = [L, \tilde{P}_\ell ]\partial + \tilde{P}_\ell [L,\partial] + [L, y_{\ell+1}]
\end{align*}
\end{proof}

We can see that the first summand adds the coefficients of
~$\partial^i$ in $[L, \tilde{P}_\ell]$ as coefficients of
$\partial^{i+1}$ in $[L, \tilde{P}_{\ell+1}]$. The second summand
can be easily computed as a product of differential operators (see
Lemma~\ref{lem:bracket_L_partial}). Finally, the third summand has
also a very generic formula (see Lemma~\ref{lemma-PlusOne}) and is
the \emph{only} summand that involves the differential variable
$y_{\ell+1}$.

\begin{lemma}\label{lem:bracket_L_partial}
    Let $L$ as in~\eqref{def-L-formal-U} and $\tilde{P}_\ell$ as in~\eqref{def-Pm-recursion}. Then:
    \begin{enumerate}
        \item \label{lem-2-1}$[L,\partial] =-\left( u_n' + u_{n-1}'\partial + \ldots + u_2'\partial^{n-2}\right)$. Therefore, it is a homogeneous operator of weight $n+1$.
        \item \label{lem-2-2}For $\ell \geq 2$, the differential operator $Q_\ell:=\tilde{P}_\ell [L,\partial]$ has order $n+\ell-2$, and it is a homogeneous operator of weight $\ell+n+1$. In particular, the coefficients of $Q_\ell$ only contain the differential variables $y_2,\ldots,y_\ell$ with no derivatives. Moreover, the coefficient of $y_{\ell-i}\partial^k$ in $Q_\ell$ is $0$, for $k> n+i-2$.
    \end{enumerate}
\end{lemma}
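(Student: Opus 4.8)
The plan is to handle the two parts in order, since part (2) builds directly on the explicit formula established in part (1).

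For part (1), I would simply expand $[L,\partial]=L\partial-\partial L$ using the commutation rule $\partial\, r = r'+r\partial$. Writing $L=\partial^n+\sum_{\ell=2}^n u_\ell\partial^{n-\ell}$, the product $L\partial$ merely shifts every exponent up by one, giving $\partial^{n+1}+\sum_{\ell=2}^n u_\ell\partial^{n-\ell+1}$. For $\partial L$, applying $\partial\,(u_\ell\partial^{n-\ell})=u_\ell'\partial^{n-\ell}+u_\ell\partial^{n-\ell+1}$ term by term yields $\partial^{n+1}+\sum_{\ell=2}^n\bigl(u_\ell'\partial^{n-\ell}+u_\ell\partial^{n-\ell+1}\bigr)$. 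Subtracting, the $\partial^{n+1}$ terms and all the $u_\ell\partial^{n-\ell+1}$ terms cancel, leaving exactly $-\sum_{\ell=2}^n u_\ell'\partial^{n-\ell}$, which is the claimed formula. Homogeneity of weight $n+1$ then follows by inspection: the coefficient $-u_\ell'$ of $\partial^{n-\ell}$ has weight $w(u_\ell')=\ell+1$, and adding the weight $n-\ell$ of $\partial^{n-\ell}$ gives $n+1$ for every $\ell$.

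For part (2), the order and homogeneity claims are immediate from principles already available. Since $\ord(\tilde{P}_\ell)=\ell$ and, by part (1), $\ord([L,\partial])=n-2$, the product $Q_\ell$ has order $\ell+(n-2)=n+\ell-2$. Since $\tilde{P}_\ell$ is homogeneous of weight $\ell$ and $[L,\partial]$ is homogeneous of weight $n+1$, the product is homogeneous of weight $\ell+n+1$ by the multiplicativity of weights recorded in the definition of the weight function. The assertion that the coefficients of $Q_\ell$ involve only the bare variables $y_2,\dots,y_\ell$ with no derivatives is the structural heart of this part: in the operator product $\tilde{P}_\ell[L,\partial]=\bigl(\sum_s a_s\partial^s\bigr)[L,\partial]$, the composition rule differentiates only the coefficients of the right-hand factor $[L,\partial]$ (which depend solely on $U$), never the coefficients $a_s$ of the left factor; since those coefficients are precisely $1$ and $y_2,\dots,y_\ell$, each appears linearly and undifferentiated in $Q_\ell$.

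The final vanishing statement is then a matter of locating the unique source of a given $y$-variable and bounding the power of $\partial$ it can reach. Here I would use that in $\tilde{P}_\ell$ the variable $y_{\ell-i}$ occurs exactly once, as the coefficient of $\partial^{i}$, since $y_s$ sits on $\partial^{\ell-s}$. Because multiplication by $[L,\partial]$ neither differentiates nor recombines the $y$'s, the only contribution to the monomial $y_{\ell-i}$ in $Q_\ell$ comes from $y_{\ell-i}\partial^{i}\cdot[L,\partial]$; and as $[L,\partial]$ has order $n-2$, this product has order at most $i+(n-2)=n+i-2$, so the coefficient of $y_{\ell-i}\partial^k$ vanishes for $k>n+i-2$. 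The main obstacle is purely the index bookkeeping — keeping straight that $y_{\ell-i}$ rides on $\partial^i$ rather than on $\partial^{\ell-i}$ — rather than any genuine difficulty; once that correspondence is fixed, every claim reduces to the order and weight arithmetic above.
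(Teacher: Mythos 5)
Your proof is correct and takes essentially the same route as the paper's: part (1) by the commutation rule $\partial u = u\partial + u'$, and part (2) by expanding $\tilde{P}_\ell\,[L,\partial]$ while observing that composition differentiates only the coefficients of the right factor, so each $y_{\ell-i}$ (sitting on $\partial^i$) appears linearly and undifferentiated and its contribution has order at most $i+(n-2)$. The paper merely writes out the resulting binomial expansion explicitly where you argue structurally; the underlying reasoning is identical.
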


\begin{proof}
   The formula in \ref{lem-2-1} follows from the commutation rule $ \partial u_k = u_k \partial +u_k '$. Next, for proving~\ref{lem-2-2}, we first observe that $\ord(Q_\ell) = \ell + n -2$, and, since $\tilde{P}_\ell$ is homogeneous of weight $\ell$ and $[L,\partial]$ is homogeneous of weight $n+1$, then $Q_\ell$ is also homogeneous of weight $\ell + n + 1$. Moreover, we can compute
   \begin{align*}
     Q_\ell & =
     \left(
     \partial^\ell + \sum_{i=0}^{\ell-2} y_{\ell-i} \partial^i
     \right) \cdot
     \left(
     -\sum_{j=0}^{n-2} u_{n-j}' \partial^j
     \right) = \\
   & =-\sum_{j=0}^{n-2} \partial^\ell u_{n-j}' \partial^j  -
   \sum_{i=0}^{\ell-2} y_{\ell-i}
   \left(
   \sum_{j=0}^{n-2}
   \sum_{s=0}^i \binom{i}{s}
   u_{n-j}^{(i-s+1)} \partial^{j+s}
   \right)= \\
   & = - u_2' \partial^{n+\ell-2}-
 \left( \ell u_2'' +u_3'\right)\partial^{n+\ell-3 }- \sum_{i=0}^{\ell-2} y_{\ell-i}
 H_{n+i-2}+    \left(\textrm{l.o.t.~in } \coC \{ U \} [\partial ]\right),
 \end{align*}
with $H_{n+i-2} $ a ordinary differential operator in $\coC \{ U \}
[\partial ]$ of order at most $n+i-2 $. Then, the result follows.
\end{proof}

\begin{lemma}\label{lemma-PlusOne}
Let $\ell$ be a positive integer.  Then $[L, y_{\ell +1}]$ is a
homogeneous diffe\-rential operator of order $n-1$, weight $n+\ell
+1 $,  and leading coefficient $n y_{\ell+1}'$.
\end{lemma}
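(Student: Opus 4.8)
The plan is to compute $[L,y_{\ell+1}]$ directly from the commutation rule, treating the differential variable $y_{\ell+1}$ as a zeroth-order operator, that is, as multiplication by $y_{\ell+1}$, which is homogeneous of weight $\ell+1$ since $w(y_{\ell+1})=(\ell+1)+0$.

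First I would record the iterated commutation rule: for any $g\in\cR\{Y\}$ and any integer $k\geq 0$ one has $\partial^k g = \sum_{j=0}^{k}\binom{k}{j}g^{(j)}\partial^{k-j}$, so that $[\partial^k,g]=\sum_{j=1}^{k}\binom{k}{j}g^{(j)}\partial^{k-j}$ is a differential operator of order $k-1$ whose leading coefficient is $k\,g'$. Next, expanding $L=\partial^n+\sum_{i=2}^{n}u_i\partial^{n-i}$ and using the $\bbQ$-linearity of the bracket, I would write $[L,y_{\ell+1}]=[\partial^n,y_{\ell+1}]+\sum_{i=2}^{n}u_i\,[\partial^{n-i},y_{\ell+1}]$, where I use that $u_i$ and $y_{\ell+1}$ commute in the commutative ring $\cR\{Y\}$, hence $[u_i\partial^{n-i},y_{\ell+1}]=u_i[\partial^{n-i},y_{\ell+1}]$. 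By the previous step each summand $u_i[\partial^{n-i},y_{\ell+1}]$ has order $n-i-1\leq n-3$, strictly below $n-1$, while $[\partial^n,y_{\ell+1}]$ has order exactly $n-1$ with leading coefficient $n\,y_{\ell+1}'$. Collecting these facts gives $\ord([L,y_{\ell+1}])=n-1$ and $\lc([L,y_{\ell+1}])=n\,y_{\ell+1}'$.

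For the weight claim I would invoke the homogeneity property already established in the definition of the weight function: $L$ is homogeneous of weight $n$ and $y_{\ell+1}$, viewed as a zeroth-order operator, is homogeneous of weight $\ell+1$; since the product of two homogeneous operators is homogeneous of additive weight, both $L\,y_{\ell+1}$ and $y_{\ell+1}\,L$ are homogeneous of weight $n+\ell+1$, hence so is their difference $[L,y_{\ell+1}]$. As an internal consistency check, the leading term $n\,y_{\ell+1}'\,\partial^{n-1}$ carries weight $w(y_{\ell+1}')+(n-1)=(\ell+2)+(n-1)=n+\ell+1$, in agreement.

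The argument is entirely elementary, so there is no genuinely hard step; the only point demanding a moment of care, and thus the main (minor) obstacle, is verifying that none of the lower-order terms $u_i\partial^{n-i}$ of $L$ can contribute at order $n-1$, so that the leading coefficient is unambiguously $n\,y_{\ell+1}'$. This is immediate from the uniform order estimate $n-i-1\leq n-3$ valid for every $i\geq 2$, which isolates $[\partial^n,y_{\ell+1}]$ as the sole source of the top-order term.
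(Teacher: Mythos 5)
Your proof is correct and follows essentially the same route as the paper's: both rest on the Leibniz expansion $\partial^k g = \sum_{j}\binom{k}{j} g^{(j)}\partial^{k-j}$ to extract the leading term $n\,y_{\ell+1}'\,\partial^{n-1}$, and both derive the weight claim from the multiplicativity of homogeneous weights applied to $L$ (weight $n$) and $y_{\ell+1}$ (weight $\ell+1$). Your bookkeeping---splitting $[L,y_{\ell+1}]$ by linearity into $[\partial^n,y_{\ell+1}]+\sum_i u_i[\partial^{n-i},y_{\ell+1}]$ and noting each lower summand has order at most $n-3$---is a slightly tidier organization of the paper's explicit double-sum computation, but the mathematical content is identical.
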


\begin{proof}
Computing the commutator  $[L, y_{\ell +1}]= L y_{\ell +1} - y_{\ell
+1} L$, we obtain
\begin{align*}
 [L, y_{\ell +1}] &=  \displaystyle\left(\partial^n + \sum_{i=0}^{n-2} u_{n-i}\partial^i\right) y_{\ell + 1} - \left(y_{\ell + 1}\partial^n + \sum_{i=0}^{n-2} y_{\ell + 1} u_{n-i}\partial^i\right) \\
                  &=  \sum_{k=0}^{n-1} \binom{n}{k} y_{\ell + 1}^{(n-k)}\partial^k + \sum_{i=0}^{n-2}\sum_{k=0}^{i} \binom{i}{k} u_{n-i}y_{\ell + 1}^{(i-k)}\partial^k
                  - \sum_{k=0}^{n-2} y_{\ell + 1} u_{n-k}\partial^k\\
                  &=  \sum_{k=0}^{n-1} \binom{n}{k} y_{\ell + 1}^{(n-k)}\partial^k
                  + \sum_{k=0}^{n-2}\sum_{i=k}^{n-2} \binom{i}{k} u_{n-i}y_{\ell + 1}^{(i-k)}\partial^k
                  - \sum_{k=0}^{n-2} y_{\ell + 1} u_{n-k}\partial^k\\
                  &=  n y_{\ell + 1}'\partial^{n-1} + \displaystyle\sum_{k=0}^{n-2} \left[\binom{n}{k} y_{\ell + 1}^{(n-k)} + \sum_{i=k+1}^{n-2}\binom{i}{k} u_{n-i}y_{\ell + 1}^{(i-k)}\right]\partial^k\\
                  &=  n y_{\ell + 1}'\partial^{n-1} + \textrm{l.o.t.}
\end{align*}
  The homogeneous weight condition follows from the fact that $L$ is homogeneous of weight $n$ and $y_{\ell + 1}$ is homogeneous of weight $\ell + 1$.
\end{proof}

We prove next the triangularity of the system $\bbE_{m,n}$
defined in \eqref{eq-system}. We provide algorithms to solve the
system $\bbE_{m,n}$ in Section \ref{sec-integration}.

\begin{theorem}[Triangular system]\label{thm-triangularE}
Given $L$ as in~\eqref{def-L-formal-U} of order $n$ and $m\geq 2$,
let us consider the system $\bbE_{m,n}$. Then, for every $i =
3,\ldots,m+1$,  the differential polynomial
\begin{equation}\label{eq-diff-syst1}
       E_{m,m+n-i} (U,  Y) = n y_{i-1}' + e_{m,i-2},
\end{equation}
where  $e_{m,i-2} \in \cR\{Y_{i-2}\}$  is homogeneous of weight $i$,
for $i\geq 3$ (setting $Y_1 = \emptyset$). More precisely,
$E_{m,m+n-3}= ny'_2 -m u'_2$.
\end{theorem}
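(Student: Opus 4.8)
The plan is to prove the statement by induction on $m$, using the recursive description of the commutator in Lemma~\ref{lem:recursive_bracket}. Write $E_{\ell,k}$ for the coefficient of $\partial^k$ in $[L,\tilde{P}_\ell]$, so that $E_{m,k}$ recovers the notation of~\eqref{def-lie-bracket}. Since $\tilde{P}_\ell\in\bbQ\{Y_\ell\}[\partial]$, every coefficient of $[L,\tilde{P}_\ell]$ and of $Q_\ell=\tilde{P}_\ell[L,\partial]$ lies in $\cR\{Y_\ell\}$. Extracting from Lemma~\ref{lem:recursive_bracket} the coefficient of $\partial^j$, and noting that right--multiplication by $\partial$ only shifts indices without introducing derivatives of the coefficients, I obtain the recursion
\[
E_{\ell+1,j}=E_{\ell,j-1}+[Q_\ell]_j+\big[[L,y_{\ell+1}]\big]_j,
\]
where $[\cdot]_j$ denotes the coefficient of $\partial^j$. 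The homogeneity claims will then be automatic: $[L,\tilde{P}_\ell]$ is homogeneous of weight $n+\ell$, so its coefficient of $\partial^{\ell+n-i}$ has weight $i$, matching $w(ny_{i-1}')=i$; hence each $e_{\ell,i-2}$, being the difference of two homogeneous pieces of weight $i$, is itself homogeneous of weight $i$, and I need only control which variables occur in it.

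For the anchor and the ``more precisely'' clause I would compute the top coefficient $E_{m,m+n-3}$ directly. Collecting only the order-$(n+m-3)$ terms of $L\tilde{P}_m$ and of $\tilde{P}_m L$, one finds the contributions $u_3+y_3$ on both sides (absent when $m=2$, but then the computation still closes), so that they cancel, leaving $E_{m,m+n-3}=ny_2'-mu_2'$. This proves the special formula and supplies the base case $m=2$, where $E_{2,n-1}=ny_2'-2u_2'$ is the unique relevant coefficient, of the required shape with $e_{2,1}=-2u_2'\in\cR=\cR\{Y_1\}$, homogeneous of weight $3$.

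For the inductive step I would split on the order $j$ of the coefficient in $[L,\tilde{P}_{\ell+1}]$ over the relevant range $n-1\le j\le n+\ell-2$. In the generic range $n\le j\le n+\ell-2$ the third summand vanishes, since $[L,y_{\ell+1}]$ has order $n-1$ by Lemma~\ref{lemma-PlusOne}; the induction hypothesis gives $E_{\ell,j-1}=ny_{\ell+n-j}'+e_{\ell,\ell+n-j-1}$ with $e_{\ell,\ell+n-j-1}\in\cR\{Y_{\ell+n-j-1}\}$; and part~\ref{lem-2-2} of Lemma~\ref{lem:bracket_L_partial} forces $[Q_\ell]_j$ to involve only the undifferentiated variables $y_p$ with $p\le n+\ell-j-2$. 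Hence the new variable $y_{\ell+n-j}$ enters $E_{\ell+1,j}$ \emph{only} through the shifted term $ny_{\ell+n-j}'$, while $e_{\ell,\ell+n-j-1}+[Q_\ell]_j$ lies in $\cR\{Y_{\ell+n-j-1}\}$; reindexing by $i=(\ell+1)+n-j$ yields exactly $E_{\ell+1,(\ell+1)+n-i}=ny_{i-1}'+e_{\ell+1,i-2}$ with $e_{\ell+1,i-2}\in\cR\{Y_{i-2}\}$. In the boundary case $j=n-1$ (that is $i=\ell+2$) the new variable $y_{\ell+1}$ instead enters through the leading coefficient $ny_{\ell+1}'$ of $[L,y_{\ell+1}]$, and the remaining summands $E_{\ell,n-2}+[Q_\ell]_{n-1}$ involve only $y_2,\dots,y_\ell$, giving $e_{\ell+1,\ell}\in\cR\{Y_\ell\}$ and completing the step.

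The main obstacle is precisely this triangularity bookkeeping: one must verify that the highest-index variable $y_{i-1}$ appears nowhere except in the distinguished term $ny_{i-1}'$, and that no variable of index $\ge i-1$ leaks into $e_{\ell+1,i-2}$. This is exactly where the sharp vanishing estimate of part~\ref{lem-2-2} of Lemma~\ref{lem:bracket_L_partial} (that the coefficient of $y_{\ell-i}\partial^k$ in $Q_\ell$ is zero once $k>n+i-2$) is indispensable, together with the careful separation of the generic range, where the new variable is produced by the index shift, from the single boundary order $j=n-1$, where it is produced by $[L,y_{\ell+1}]$.
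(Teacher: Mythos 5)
Your proof is correct and takes essentially the same route as the paper's: induction on $m$ via the recursion of Lemma~\ref{lem:recursive_bracket}, with part~\ref{lem-2-2} of Lemma~\ref{lem:bracket_L_partial} controlling which variables $y_p$ can enter $Q_\ell$ at each order and Lemma~\ref{lemma-PlusOne} isolating the distinguished term $ny_{\ell+1}'$ at the boundary order $n-1$, plus the same automatic homogeneity argument. The only cosmetic differences are that you establish $E_{m,m+n-3}=ny_2'-mu_2'$ by a direct computation of the top coefficient of $[L,\tilde{P}_m]$ (the paper instead derives it inside the induction from $\alpha_{j,n+j-2}=u_2'$ and the vanishing $\alpha_{j,n+m-2}=0$ for $j<m$), and that you merge the paper's top case $\ell=3$ with its intermediate cases $4\leq\ell\leq m+1$ into a single generic range $n\leq j\leq n+\ell-2$.
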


\begin{proof}

We proceed by induction on $m$. First, for $m=2$ we use
Lemmas~\ref{lem:recursive_bracket} and~\ref{lem:bracket_L_partial}
to we obtain
\[
[L,\tilde{P}_{2}] = -2\sum_{p=1}^{n-1} u'_{p+1} \partial^{n-p} -
\sum_{p=2}^{n} u''_p \partial^{n-p} +[L,y_2 ] .
\]
Then, $\bbE_{2,n} = \{E_{2,n-1}\}$, which is the coefficient of
$\partial^{n-1}$. Therefore,
\[
 \bbE_{2,n} = \{ny'_2 { -2 u'_2 }\}.
\]
 Now, assume the theorem holds for $m \geq 2$ and let us prove the case $m+1$. Using the recursive structure of $[L, \tilde{P}_{m+1}]$ from Lemma~\ref{lem:recursive_bracket},  formula \eqref{def-Tilde_Pm} and Lemma \ref{lem:bracket_L_partial}, the following recursion holds
 \begin{equation*}
 \label{eq-recursion-corchete-1}
   [L, \tilde{P}_{m+1}]= \! \sum_{j=0}^{n+m-3}\!\!  E_{m,j} \partial^{j+1}-
   \Bigg(
   \partial^m +\sum_{j=0}^{m-2} y_{m-j} \partial^j
   \Bigg)
   \Bigg( \sum_{p=2}^n u'_p \partial^{n-p}
   \Bigg) +
   [L, y_{m+1}].
 \end{equation*}
We use the following notations to simplify the previous identity:
\[\left\{\begin{aligned}
        A_j &:= \partial^j \displaystyle\sum_{p=2}^n u'_p \partial^{n-p} \in \cR [\partial ] \ , \textrm{ of order }  n+j-2 ,\\
        \alpha_{j,\ell} &:= \coeff(A_j, \partial^\ell) \in \cR,\vspace{1em}\\
        R_{m+1} &:= [L, y_{m+1}]-n y'_{m+1} \partial^{n-1} \in \cR\{y_{m+1}\}[\partial] \text{ of order } n-2.
\end{aligned}\right.\]
Hence, the previous identity can be rewritten as
\[
    [L, \tilde{P}_{m+1}]= \sum_{j=0}^{n+m-3} E_{m,j} \partial^{j+1}-
A_m - \sum_{j=0}^{m-2} y_{m-j} A_{j} + ny'_{m+1} \partial^{n-1} +
R_{m+1} .
\]

In order to consider $\bbE_{m+1,n}$, we need to look the
coefficients of $\partial^{n+m+1-\ell}$, for $\ell = 3, \ldots,
m+2$. Let us begin with the two extreme cases. For $\ell = m+2$:
\[E_{m+1, n-1} = E_{m, n-2} - \alpha_{m,n-1} - \sum_{j=0}^{m-2} y_{m-j} \alpha_{j, n-1} + ny'_{m+1}.\]
Then  $e_{m+1, m} := E_{m+1, n-1} - ny'_{m+1} $ is in $
\cR\{Y_m\}$. On the other hand, for $\ell = 3$ we have:
\[E_{m+1, n+m-2} = E_{m, n+m-3} - \alpha_{m,n+m-2} - \sum_{j=0}^{m-2} y_{m-j} \alpha_{j, n+m-2}.\]
Since $A_j$ has order $n+j-2$, it is clear that $\alpha_{j, \ell} =
0$ for $\ell > n + j - 2$, so, in particular, $\alpha_{j, n+m-2} =
0$ for $j < m$. Moreover, we can observe that~$\alpha_{j,n+j-2} =
u_2'$. Hence, using the recursion for $E_{m, n+m-3}$ we get:
\[E_{m+1, n+m-2} = ny_2' - mu_2' - u_2' = ny_2' - (m+1)u_2'.\]

Finally, for all the intermediate equations, $4\leq \ell \leq m+1$,
we observe that
\[
E_{m+1, n+m+1-\ell} = E_{m, n+m-\ell} - \alpha_{m, n+m+1-\ell} -
\sum_{j=m+3-\ell}^{m-2} y_{m-j} \alpha_{j, n+m+1-\ell}.
\]
But, by the induction hypothesis, the first summand equals $E_{m,
n+m-\ell} = ny_{\ell-1}' + e_{m, \ell-2}$, where $e_{m, \ell-2}$ is
in $  \cR\{Y_{i-2}\}$. The second summand is in $\cR$, and we can
observe that the third summand involve the variables $y_k$ for $k =
2,\ldots,\ell-3$. Observe that in the case of $\ell = 4$ this sum is
empty.

Therefore, $E_{m+1, n+m+1-\ell} = ny_{\ell-1}' + e_{m+1, \ell-2}$
with $e_{m+1, \ell-2} \in \cR\{Y_{\ell-2}\}$, and this concludes the
proof.
\end{proof}

\subsection{Solving by integration in closed form }\label{sec-integration}\label{sec-closedForm}

Let $L$ be the formal differential operator of order $n\geq 2$ in
$\bbQ\{U\}[\partial]$ and in normal form as in
\eqref{def-L-formal-U}. At this point, for a fixed $m\geq 2$, we
would like to solve the triangular differential system $\bbE_{m,n}$
with respect to the set of variables $Y_m=\{ y_2 , \dots , y_m\}$.
Recall that $Y_i = \{ y_2 , \dots , y_{i} \}$, $2\leq j\leq m+1$ and
define $Y_1:=\emptyset$. By Theorem \ref{thm-triangularE} the
equations of $\bbE_{m,n}$ have the form
\begin{equation}\label{eq-diff-syst}
       E_{m,m+n-i} (U,  Y) = n y_{i-1}' + e_{m,i-2} =0,\ i = 3,\ldots,m+1,
\end{equation}
with  $e_{m,i-2} \in \bbQ\{U\}\{Y_{i-2}\}$ homogeneous of weight
$i$. Furthermore, at each step, the new variable $y_{i-1}$, that we
solve for, appears differentiated only once.

The existence of a unique weighted solution $Z=\{q_2,\ldots ,q_m\}$,
with weights $w(q_i)=i$, of $\bbE_{m,n}$ with respect to $Y_m$ is
guaranteed by Corollary \ref{cor-uniquesol}. It implies the
following result:

\begin{lemma}\label{lem-totalder}
Let $Z=\{q_2,\ldots ,q_m\}$  be the unique weighted solution of
system $\bbE_{m,n}$ with respect to the variables $Y$.
    Then the polynomials
    $$e_{m,i}(q_2,\ldots ,q_{i-1}):=\epsilon_Z(e_i(Y_{i})),\,\,\, i=3,\ldots ,m$$ are total derivatives of polynomials in $\bbQ\{U\}$. Moreover, there exists a unique $q_i\in \bbQ\{U\}$, of weight $i$, such that $\partial(q_i)=e_{m,i}(q_2,\ldots ,q_{i-1})$.
\end{lemma}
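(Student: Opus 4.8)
The plan is to prove Lemma~\ref{lem-totalder} by exploiting the structure already established: the system $\bbE_{m,n}$ is triangular with equations of the form $n y_{i-1}' + e_{m,i-2} = 0$ (Theorem~\ref{thm-triangularE}), and it has a unique weighted solution $Z=\{q_2,\ldots,q_m\}$ (Corollary~\ref{cor-uniquesol}) whose entries lie in $\bbQ\{U\}$ with $w(q_\ell)=\ell$. Each equation, once we substitute the already-computed components $q_2,\ldots,q_{i-1}$ into $e_{m,i-2}$, becomes $n q_{i-1}' = -e_{m,i-2}(q_2,\ldots,q_{i-1})$, which says that the right-hand side is (up to the nonzero constant $n$) a total derivative, namely the derivative of $q_{i-1}\in\bbQ\{U\}$. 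So the heart of the matter is that solvability of the triangular system \emph{forces} each $\epsilon_Z(e_{m,i-2})$ to be a total derivative, and the antiderivative is precisely the unique weighted solution component.

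First I would make the indexing explicit. The lemma writes $e_{m,i}(q_2,\ldots,q_{i-1}):=\epsilon_Z(e_i(Y_i))$ for $i=3,\ldots,m$; matching this against the triangular form $E_{m,m+n-i}=ny_{i-1}'+e_{m,i-2}$ from \eqref{eq-diff-syst}, the polynomial to be integrated at the stage solving for $y_{i-1}$ is $\epsilon_Z(e_{m,i-2})$, and the antiderivative sought is $q_{i-1}$. I would therefore reindex so that the claim reads: for each $j=2,\ldots,m$, the evaluated polynomial $\epsilon_Z(e_{m,j-1})\in\bbQ\{U\}$ is a total derivative, and $q_j\in\bbQ\{U\}$ with $w(q_j)=j$ satisfies $\partial(q_j)=-\tfrac{1}{n}\epsilon_Z(e_{m,j-1})$. (The statement as printed absorbs the factor $n$ and the sign into the definition; I would keep the bookkeeping transparent but defer to the paper's normalization.)

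Next I would run the argument itself, which is short once the setup is in place. By Corollary~\ref{cor-uniquesol}, the set $Z$ solves $\bbE_{m,n}$, so applying $\epsilon_Z$ to the $i$-th equation of \eqref{eq-diff-syst} gives
\[
n\,\partial(q_{i-1}) + \epsilon_Z(e_{m,i-2}) = 0,
\]
using that $\epsilon_Z$ is a differential ring homomorphism and hence commutes with $\partial$, so $\epsilon_Z(y_{i-1}')=q_{i-1}'=\partial(q_{i-1})$. Because $\epsilon_Z$ sends $\cR\{Y_{i-2}\}$ into $\cR=\bbQ\{U\}$ (it substitutes the $q_\ell\in\bbQ\{U\}$ for the $y_\ell$), the polynomial $\epsilon_Z(e_{m,i-2})$ genuinely lies in $\bbQ\{U\}$. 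The displayed identity exhibits $-\epsilon_Z(e_{m,i-2})=n\,\partial(q_{i-1})=\partial(n\,q_{i-1})$ as a total derivative of an element of $\bbQ\{U\}$, establishing the first assertion. For existence of $q_i$ as claimed, the antiderivative is furnished directly by the corresponding solution component; for its weight, I would invoke homogeneity: since $e_{m,i-2}$ is homogeneous of weight $i$ and $\epsilon_Z$ respects the weight grading (as $w(q_\ell)=\ell=\tilde w(y_\ell)$), the evaluated polynomial is homogeneous of weight $i$, hence its antiderivative has weight $i-1$, consistent with $w(q_{i-1})=i-1$.

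The genuinely delicate point is \emph{uniqueness} of the integrated antiderivative within $\bbQ\{U\}$, since an ordinary antiderivative is only defined up to an additive constant in $\coC$. Here I would argue that the weight constraint $w(q_i)=i$ with $i\geq 2$ eliminates the ambiguity: a constant in $\coC$ has weight $0$, so it cannot appear in a homogeneous antiderivative of positive weight, forcing the antiderivative to be unique among weighted elements of $\bbQ\{U\}$. This is exactly the mechanism underlying the uniqueness in Corollary~\ref{cor-uniquesol}, so I would cite that corollary for uniqueness rather than re-deriving it, and I expect the weight-rigidity of the antiderivative to be the one step requiring care. The remaining ingredient I would verify is that each $\epsilon_Z(e_{m,i-2})$ actually admits an antiderivative \emph{in $\bbQ\{U\}$} and not merely in some differential overring — but this is immediate from the existence half of Corollary~\ref{cor-uniquesol}, which produces $q_{i-1}\in\bbQ\{U\}$ solving the very equation in question, so no separate integrability criterion is needed.
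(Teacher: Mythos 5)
Your proof is correct and takes essentially the same route as the paper: the paper presents Lemma~\ref{lem-totalder} as an immediate consequence of Corollary~\ref{cor-uniquesol} together with the triangular form~\eqref{eq-diff-syst}, which is precisely what you spell out by applying the differential homomorphism $\epsilon_Z$ to $ny_{i-1}'+e_{m,i-2}=0$ to exhibit $\epsilon_Z(e_{m,i-2})=\partial(-n\,q_{i-1})$ as a total derivative in $\bbQ\{U\}$, with homogeneity of positive weight killing the additive constant. Your explicit reindexing (and flagging of the paper's loose sign/$\tfrac{1}{n}$ normalization) and the direct weight-rigidity argument for uniqueness correctly fill in the details the paper leaves implicit.
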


More precisely, we have recursive formulas
\begin{equation}
    q_2:=\frac{m }{n}u_2 ,\ q_i:=\frac{1}{n}\partial^{-1} e_{m,i}(q_2,\ldots ,q_{i-1}),\ i=3,\ldots ,m.
\end{equation}
Observe that $\partial^{-1} e_{m,i}(q_2,\ldots ,q_{i-1})$ is well
defined by Lemma \ref{lem-totalder}.

\medskip

\begin{algorithm}[!ht]
\caption{\texttt{Integrate}}\label{alg:solving_integration}
    \Input{System $\bbE_{m,n}$ as in~\eqref{eq-system}}
    \Output{Unique weighted solution $Z=\{q_2,\ldots,q_m\}$ of $\bbE_{m,n}$ w.r.t.~$Y$}
    $Z \gets \{y_2 = \frac{m }{n}u_2\}$\;
    \For{$i \gets 3,\ldots,m$}{
        $e_{m,i} \gets \epsilon_Z(E_{m,n+m-i-1}) - ny_i'$\Comment*[r]{As in Equation~{\ref{eq-diff-syst}}}
        $q_i \gets \frac{1}{n}\texttt{INTEG}(e_{m,i})$\;
        $Z \gets Z \cup \{y_i = q_i\}$\;
    }
    \Return $Z$\;
\end{algorithm}

In step 4 of Algorithm \ref{alg:solving_integration} the instruction
 $\texttt{INTEG}(e_{m,i})$ indicates the computation of $\partial^{-1} e_{m,i}(q_2,\ldots ,q_{i-1})$. For this purpose we use the canonical decomposition of differential polynomials in one differential variable introduced in ~\cite{Bilge1992}. This method was  extended in the context of differential algebra in the article \cite{Boulier2016}, for partial derivations and general rankings.

Namely, given a differential polynomial $F(z)\in \bbQ\{z\}$, an
algorithm was  provided in ~\cite{Bilge1992} to write this
polynomial in a unique form~$F(z) = \partial(A(z)) + B(z)$, where
$B(z)$ is not in the image of~$\partial$ \cite{Bilge1992,
Boulier2016}, they contain only non-linear monomials in their
highest derivative. This algorithm works for multivariate
differential polynomials in $\bbQ\{U\}$ by iterating the algorithm
over each of the differential variables $u_2>u_3>\cdots >u_n$ and
collecting the final result.

The methods in~\cite{Bilge1992, Boulier2016} provide a criteria to
detect total derivatives (i.e., $F(z)$ is a total derivative if and
only if $B(z) = 0$). We have implemented this method to compute
antiderivatives of differential polynomials in our SageMath package,
\texttt{dalgebra} (see Section~\ref{ExperimentalResults}).

\section{Computing integrable hierarchies}\label{sec:hierarchies}

It is well known that the Korteweg-de Vries (KdV) equation $ u_t
=u_{xxx}-6u u_x$ can be represented as the integrability condition
of the linear differential system
\[
L \psi = \lambda \psi \ , \psi_t = A_3 \psi ,
\]
for a Schrödinger operator $L =-\partial^2 +u$ and a third order
operator $A_3 = \partial^3 +\frac{3}{2}u\partial +\frac{3}{4}u_x$,
where $\partial=\frac{d}{dx}$. In fact, the previous system can be
rewritten as a Lax equation
\[
L_t = [A_3 , L] \ ,
\]
where $L_t$ denotes the differential operator obtained from $L$ by
computing the partial derivative of its coefficients with respect to
$t$. For $m \geq 1$ an arbitrarily given natural number, the
equations of the KdV hierarchy appear by considering the Lax
equations
\[
L_t = [A_m , L] \ , \ \textrm{ with } t=t_m \ ,
\]
where $A_m$ is an almost commuting differential operator of order
$m$. Observe that  $[A_m, L]$ is a multiplication operator.

\medskip

Following \cite{DS, Dikii, KN2}, the Gelfand-Dickey hie\-rar\-chies,
or \ generalized KdV hierarchies in \cite{SW}, appear allowing the
operator $L$ to be a formal differential operator of arbitrary order
$n$, in normal form, as in Section \ref{sec:almost}
\begin{equation*}
         L = \partial^n + u_{2}\partial^{n-2} + \ldots + u_{n-1}\partial + u_{n} .
    \end{equation*}
Let us consider a { field} of constants $(\coC,\partial)$ and assume
that $\coC$ has zero characteristic, thus $\coC$ contains $\bbQ$.
 The Lax equations in this case are
 \begin{equation}\label{eq-Lax}
     L_t = [A_m , L] \ , \ \textrm{ with } t=t_m \ ,
 \end{equation}
 where
\begin{equation*}
    A_m= \partial^m + a_2 \partial^{m-2} + \ldots +a_{m-1}\partial + a_m ,
\end{equation*}
is an almost commuting operator with $L$ of order $m$, that is $A_m\in W(L)$.  The equation \eqref{eq-Lax} can be written in terms of the elements of the homogeneous 
$\coC$-basis $\cH(L)$ in \eqref{def-basis-Pm} of Theorem
\ref{thm-basis}. In fact,
\begin{equation}\label{eq-Lax-A}
    A_m =P_m +c_{m,m-1}P_{m-1}+\dots + c_{m,0}P_{0} ,\,\,\, c_{m,j} \in \coC.
\end{equation}
More precisely, let us consider a fixed $m \in \bbN$ and the
operator $P_m$ in the almost commuting basis obtained with Algorithm
~\ref{alg:compute_basis} and $Z = \{q_2,\ldots,q_n\}$ be the
elements that allow $\epsilon_Z(\tilde{P}_m) = P_m$. Since $P_m$
almost commutes with $L$, then $[L,P_m]$ is a differential operator
with order $n-2$. As described in Theorem~\ref{lem-almost-system},
we obtain:
\[[L,P_m] = [L,\epsilon_Z(\tilde{P}_m)] = \sum_{i=2}^{n} E_{m,n-i}(U,Z)\partial^{n-i}.\]
Let us denote the specialization of $-E_{m,n-i}(U,Y)$ by
$\epsilon_Z$ as
\[H_{m,n-i}(U) := -E_{m,n-i}(U,Z),\,\,\, m\geq 1, i=2,\ldots ,n.\]
Consequently, the Lax equations \eqref{eq-Lax}  provide a system of
partial differential equations defined by non linear differential
polynomials in the set of differential variables $U$, obtained from
the coefficient of $\partial^{n-i}$ in $[A_m,L]$
\begin{equation}\label{eq-diff-system-LA}
    u_{i, t} = H_{m,n-i}(U) +\sum_{j=1}^{m-1} c_{m,j}  H_{j,n-i}(U) \ , \textrm{for } \  i = 2, \dots , n,\,\,\, m\geq 2.
\end{equation}
For a fixed $n$, the family \eqref{eq-diff-system-LA} is called {\it
the Gelfand-Dickey (GD) hierarchy of $L_n$}, see \cite{Dikii,
DrinfeldSokolov1985, Wilson1985} and references therein. Some of
these hierarchies have been given specific names, for instance, for
$n=2$, the Gelfand-Dickey hierarchy of $L_2=\partial^2+u_2$ is the
Korteweg-de Vries (KdV) hierarchy
\begin{equation}\label{eq-KdV}
    u_{2, t} = H_{m,0}(U) +\sum_{j=1}^{m-1} c_{m,j}  H_{j,0}(U)\ ,\,\,\, m\geq 2.
\end{equation}
For $n=3$, the Boussinesq hierarchy is obtained, see Section
\ref{sec-Third}.

\medskip

Other methods have been developed to compute the GD hierarchies,
although normally they are particular methods for a fixed level, a
fixed value of~$n$. It is well known that the KdV hierarchy can be
computed using recursion operators. We can rewrite \eqref{eq-KdV} as
\begin{equation}\label{eq-KdVb}
    u_{2, t} = \kdv_m (u_2) +\sum_{j=1}^{m-1} c_{m,j}  \kdv_j (u_2)\ ,\,\,\, m\geq 2.
\end{equation}
where $\{\kdv_j(u_2)\}_{j\geq 0}$ is computed using the recursion
operator, see \cite{MRZ1} based on \cite{GH},
\[\cR=-\frac{1}{4}\partial^2+u_2+\frac{1}{2}u_2'\partial^{-1},\]
defining the recursion
\begin{equation}\label{eq-kdvc}
\kdv_0:=u',\,\,\, \kdv_n:=\cR(\kdv_{n-1}),\mbox{ for }n\geq 1.
\end{equation}

We obtain from \eqref{eq-diff-system-LA} {\it the stationary
Gelfand-Dickey hierarchy of $L_n$} consi\-dering the differential
variables in $U$  as constants with respect to $t$. These nonlinear
differential polynomials can be seen as conditions over the
differential variables $U$ for an almost commuting operator $A_m$ to
commute with $L_n$. See \cite{DrinfeldSokolov1985, Wilson1985} and
the references therein.

\color{black}

\subsection{Third order operators and their GD hierarchy}\label{sec-Third}

Let us consider the third order operator in $\cR[\partial]$ with
$\cR=\coC\{u_2,u_3\}$ given by
\[L_3 = \partial^3 + u_2\partial + u_3.\]
The Gelfand-Dickey hierarchy of $L_3$ consists of systems of two
nonlinear equations. For each $m\geq 2$
\begin{align}\label{eq-Bq_system}
    u_{2, t} = H_{m,1}(u_2,u_3) +\sum_{j=1}^{m-1} c_{m,j}  H_{j,1}(u_2,u_3),\\
    u_{3, t} = H_{m,0}(u_2,u_3) +\sum_{j=1}^{m-1} c_{m,j}  H_{j,0}(u_2,u_3).
\end{align}

We show the computation of the homogeneous basis $\cH_5(L_3)$ of the
space of almost commuting operators $W_5(L_3)$ of order less than or
equal to $5$,
\[\cH_5(L_3)=\{P_1, P_2, P_3,P_4,P_5\}.\]
For each $m\geq 1$, it holds that
\[ [P_m , L_3 ] = H_{m,1}(U)\partial + H_{m,0}(U),\]
where $H_{m,0}(U)$ and $H_{m,1}(U)$ are differential polynomials in
$\cR$ that determine all the equations of the Gelfand-Dickey
hierarchy for $L_3$. We will compute next the equations
\eqref{eq-Bq_system} for $2\leq m\leq 5$. Recall that $P_1=\partial$
by Example \eqref{ex-n3}, thus $[L_3, P_1] = - u_2'\partial - u_3'$.
Hence we have
\[H_{1,0}(U) = u_{3}',\,\,\, H_{1,1}(U) = u_{2}'.\]

\medskip

For $m=2$, the Lie bracket of $L_3$ with the polynomial $\tilde{P}_2
= \partial^2 + y_2$ equals
\[
    [L_3, \tilde{P}_2]=
                        E_{2,2} \partial^2 + E_{2,1} \partial  + E_{2,0},
\]
where
\begin{align*}
    E_{2,2}  & =   \mathbf{3y_2' - 2u_2'}, \\
    E_{2,1}  & =   3y_2'' -u_2'' -2u_3', \\
    E_{2,0}  & =   y_2''' + u_2y_2 - u_3''.
\end{align*}
In order to obtain the almost commuting operator $P_2$, we force the
coefficient of $\partial^2$ to be zero.  We compute $q_2$ such that
$E_{2,2}(u_2,q_2) = 0$ and obtain
\[P_2=\partial^2 + q_2=\partial^2 + \frac{2}{3}u_2.\]
The system of the GD hierarchy at level $m=2$ is,
\[u_{2,t}=H_{2,1}+c_{2,1} H_{1,1} ,\,\,\, u_{3,t}=H_{2,0}+c_{2,1} H_{1,0}\]
where
\[H_{2,0}(u_2,u_3) = -u_2'' + 2u_3',\qquad H_{2,1}(u_2,u_3) = -\frac{2}{3}u_2''' - \frac{2}{3}u_2'u_2 + u_3''.\]

\begin{rem}
The system of the GD hierarchy for $m=2$ with $c_{1,2}=0$ is
equivalent to the system of partial differential equations
\begin{equation}\label{eq:boussGD}
u_{3,t}=-u_{2}''+2u_{3}',\quad u_{2,t}=u_{3}''-\frac23
u_{2}'''-\frac23 u_{2}u_{2}'   \ , \ \textrm{with } t=t_2 .
\end{equation}
Hence, eliminating~$u_2$, implies the following form of the
classical Boussinesq equation
\begin{equation}\label{eq:bouss}
(u_3)_{tt}=-\frac13 u_{3}^{(4)}-\frac43 (u_{3}u_{3}')'.
\end{equation}

Recursive formulas were given in ~\cite{DGU} to provide an
elementary algebraic a\-ppro\-ach {for the entire hierarchy of}
Boussinesq systems.
The recursion operator for~\eqref{eq:bouss} can be found using the
technique given in~\cite{GKS} or through liftings of known recursion
operators for different forms of the Boussinesq equation. It is not
the goal of this paper to explain how all these forms of the
Boussinesq systems can be identified.

\end{rem}

\medskip

For $m=3$, we have $P_3=L_3$ and $H_{3,0}(U)=H_{3,1}(U)=0$. For $m =
4$, the Lie bracket of $L_3$ with the polynomial $\tilde{P}_4 =
\partial^4 + y_2 \partial^2+ y_3\partial+y_4$ equals
\[
[L_3, \tilde{P}_4]=E_{4,4} \partial^4+E_{4,3} \partial^3 + E_{4,2}
\partial^2  + E_{4,1}\partial+ E_{4,0},
\]
where
\begin{align*}
 E_{4,4} & =  \mathbf{3y_2' -4u_2'},\\
 E_{4,3} & =   \mathbf{3y_3' + 3y_2'' - 6u_2'' - 4u_3'},\\[0.25em]
 E_{4,2} & =  \mathbf{3y_4' + 3y_3'' + y_2''' + u_2y_2' - 2u_2'y_2 -4u_2''' -6u_3''} , \\[0.25em]
E_{4,1} & = 3y_4''+y_3'''+ u_2y_3'-u_2''y_2 - u_2'y_3  -2u_3'y_2 - u_2^{(4)} -4u_3''',\\[0.25em]
E_{4,0} & =  y_4''' + u_2y_4' - u_3''y_2 - u_3'y_3 - u_3^{(4)}.
\end{align*}
Solving the triangular system
$\bbE_{4,3}=\{E_{4,4}=0,E_{4,3}=0,E_{4,2}=0\}$ with respect to $\{y_2,y_3,y_4\}$ we obtain $Z=\{q_2,q_3,q_4\}$ 
and then
\[P_4=\epsilon_Z (\tilde{P}_4)=\partial^4 + \frac{4}{3}u_2\partial^2 + \left(\frac{2}{3}u_2' + \frac{4}{3}u_3\right)\partial + \left(\frac{2}{9}u_2'' + \frac{2}{3}u_3' + \frac{2}{9}u_2^2\right).
\]
The system of the GD hierarchy at level $m=4$ is,
\[u_{2,t}=H_{4,1}+c_{4,2} H_{2,1}+c_{4,1} H_{1,1} ,\,\,\, u_{3,t}=H_{4,0}+c_{4,2} H_{2,0}+c_{4,1} H_{1,0}\]
where
\begin{align*}
    -H_{4,0}(u_2,u_3) &= \frac{4}{9}u_2'u_2^2 + \frac{2}{3}u_2'''u_2 + \frac{4}{3}u_2''u_2' - \frac{2}{3}u_2'u_3' - \frac{2}{3}u_2u_3'' - \frac{4}{3}u_3'u_3 \\
    &\quad{}+ \frac{2}{9}u_2^{(5)}- \frac{1}{3}u_3^{(4)},\\
   -H_{4,1}(u_2,u_3) &= \frac{2}{3}u_2''u_2 + \frac{2}{3}u_2'^2 - \frac{4}{3}u_2'u_3 - \frac{4}{3}u_2u_3' + \frac{1}{3}u_2^{(4)} - \frac{2}{3}u_3'''.
\end{align*}

For $m =5$, the Lie bracket of $L_3$ with the polynomial
$\tilde{P}_5 = \partial^5 + y_2 \partial^3+ y_3\partial^2+
y_4\partial +y_5$ equals
\[
[L_3, \tilde{P}_5]=E_{5,5} \partial^5+E_{5,4} \partial^4+E_{5,3}
\partial^3+E_{5,2} \partial^2  + E_{5,1}\partial+ E_{5,0},
\]
where
\begin{align*}
        E_{5,5} &= {\bf 3y_2' -5u_2'}  ,\\
        E_{5,4} &= {\bf 3y_3' + 3y_2'' - 10u_2'' - 5u_3'} ,\\
        E_{5,3} &= {\bf 3y_4' + 3y_3'' + y_2''' + u_2y_2' -3u_2'y_2 -10u_2''' -10u_3''} ,\\
        E_{5,2} &= {\bf 3y_5'+3y_4''+y_3'''+ u_2y_3' -3u_2''y_2 -2u_2'y_3-3u_3'y_2-5u_2^{(4)} -10u_3'''} ,\\
        E_{5,1} &= 3y_5''+ y_4'''+ u_2y_4'-u_2'''y_2 - u_2''y_3 - u_2'y_4  -3u_3''y_2 -2u_3'y_3 - u_2^{(5)} -5u_3^{(4)},\\
        E_{5,0} &= u_2y_5' - u_3'''y_2 - u_3''y_3 - u_3'y_4 - u_3^{(5)} + y_5'''.
\end{align*}
Solving the triangular system
$\bbE_{5,3}=\{E_{5,5}=0,E_{5,4}=0,E_{5,3}=0,E_{5,2}=0\}$ with
respect to $\{y_2,y_3,y_4,y_5\}$ we obtain $Z=\{q_2,q_3,q_4,q_5\}$
and
\begin{align*}
P_5= \epsilon_Z
 (\tilde{P}_5 ) &=\partial^5 + \frac{5}{3}u_2 \partial^3 + \left(\frac{5}{3}u_2' + \frac{5}{3}u_3\right)\partial^2 + \left(\frac{10}{9}u_2'' + \frac{5}{3}u_3' + \frac{5}{9}u_2^2\right)\partial\\
&\quad{}+ \left(\frac{10}{9}u_3'' + \frac{10}{9}u_2u_3\right).
\end{align*}
The system of the GD hierarchy at level $m=5$ is,
\begin{align*}
    u_{2,t}&=H_{5,1}+c_{5,4} H_{4,1}+c_{5,2} H_{2,1}+c_{5,1} H_{1,1} ,\\ u_{3,t}&=H_{5,0}+c_{5,4} H_{4,0}+c_{5,2} H_{2,0}+c_{5,1} H_{1,0}
\end{align*}
where
\[\left\{\begin{aligned}
    -H_{5,0}(u_2,u_3) & = \frac{10}{9}u_{2}' u_{2} u_{3} + \frac{5}{9}u_{2}^2u_{3}' + \frac{10}{9}u_{2}''' u_{3} + \frac{20}{9}u_{2}'' u_{3}' \\[1em]
                         &\quad {} + \frac{5}{3}u_{2}' u_{3}'' + \frac{5}{9}u_{2} u_{3}''' - \frac{5}{3}u_{3}'' u_{3} - \frac{5}{3}u_{3}'^2 + \frac{1}{9}u_{3}^{(5)},\\[1.5em]
    -H_{5,1}(u_2,u_3) & = \frac{5}{9}u_{2}' u_{2}^2 + \frac{5}{9}u_{2}''' u_{2} + \frac{5}{9}u_{2}'' u_{2}'  + \frac{5}{3}u_{2}'' u_{3} + \frac{5}{3}u_{2}' u_{3}' \\[1em]
                        &\quad{}  - \frac{10}{3}u_{3}' u_{3} + \frac{1}{9}u_{2}^{(5)}.
\end{aligned}\right.\]

More results on the computation of the almost commuting basis and
the differential polynomials defining the GD hierarchy for $n=3$ and
$m\geq 2$ can be found in the repository \texttt{da\_wilson}, see
Section \ref{ExperimentalResults}.

\subsection{{ Fifth} order operators and their GD hierarchy}\label{sec-Fith}


Let us consider the third order operator in $\cR[\partial]$ with
$\cR=\coC\{U\}$, with $U=\{u_2,{ u_3},u_4,u_5\}$ given by
\[L_5 = \partial^5+u_2\partial^3+u_3\partial^2+ u_4\partial + u_5.\]
We show the computation of the homogeneous basis $\cH_9(L_5)$ of the
space of almost commuting operators $W_9(L_5)$ of order less than or
equal to $5$,
\[\cH_9(L_5)=\{P_1, P_2, P_3,P_4,P_5,P_6,P_7,P_8, P_9\}.\]
For each $m\geq 1$, it holds that
\[[P_m , L_5 ] =H_{m,3}(U)\partial^3+H_{m,2}(U)\partial^2+ H_{m,1}(U)\partial + H_{m,0}(U),\]
where  $H_{m,0}(U)$, $H_{m,1}(U)$, $H_{m,2}(U)$ and $H_{m,3}(U)$ are
differential polynomials in $\cR$ that determine all the equations
of the GD hierarchy for $L_5$. An almost commuting operator in
$W_9(L_5)$ has the form
\[A_9=P_9+\sum_{j=1}^8 c_{9,j} P_j.\]

\medskip

The system of the Gelfand-Dickey hierarchy at level $m=9$ in
\eqref{eq-diff-system-LA} is ,
\begin{align*}
    u_{2,t}&=H_{9,3}(U)+\sum_{j=1}^8 c_{9,j} H_{j,3}(U),\\
    u_{3,t}&=H_{9,2}(U)+\sum_{j=1}^8 c_{9,j} H_{j,2}(U),\\
    u_{4,t}&=H_{9,1}(U)+\sum_{j=1}^8 c_{9,j} H_{j,1}(U),\\
    u_{5,t}&=H_{9,0}(U)+\sum_{j=1}^8 c_{9,j} H_{j,0}(U).
\end{align*}

We computed $H_{j,5-i}$ for $1\leq j\leq 9$ and $i=2,\ldots ,5$. The
results of these computations can be found in the repository
\texttt{da\_wilson}, where we have collected several computations of
these hierarchies to provide a  data set that can be used by
researchers in Mathematics and Physics. In particular, we refer to
the folder \texttt{latex}, where the LaTeX representation of these
polynomials $H_{i,j}(U)$ can be found in the files with name
\texttt{(5\_i)[H\_j].tex}. A further description of this repository
can be found in Section~\ref{ExperimentalResults}.

\section{Implementation}\label{ExperimentalResults}
In this paper, we revisit the theory of almost commuting
differential operators and the definition of the homogeneous basis
for these operators. In order to allow future computations, we have
implemented Algorithm~\ref{alg:compute_basis} in the computer
algebra system SageMath~\cite{SAGE2023}, in the package
\texttt{dalgebra}, a set of tools implemented in SageMath by the
first author and dedicated to Differential Algebra. We present in
this section the main features of this implementation, showing  the
time spent for these computations and presenting a public repository
where the results can be retrieved in several formats.

{  Although a complexity analysis of Algorithm
~\ref{alg:compute_basis} was not performed, we would like to point
out that the complexity would depend on: the multiplication of
differential operators in Step 2; and the complexity of the
integration algorithm used in Step 4, the instruction
$\texttt{INTEG}(e_{m,i})$ of Algorithm
\ref{alg:solving_integration}. It is important to emphasize that the
system $\bbE_{m,n}$ of Step 3 is already triangular, as proved in
Theorem \ref{thm-triangularE}, so there is no triangulation
algorithm involved.}

\subsection{Implementation of Algorithm~\ref{alg:compute_basis}}

Algorithm~\ref{alg:compute_basis} has been implemented in the module
\texttt{almost\_commuting.py} inside the SageMath open-source
package for differential algebra \texttt{dalgebra}. This software is
publicly available (in version 0.0.5 when writing this paper) and
can be obtained from the following link:
\begin{center}\url{https://github.com/Antonio-JP/dalgebra/releases/tag/v0.0.5}\end{center}
We refer to the README file in the repository for an installation
guide and examples on how to use the software.

We have included Algorithm~\ref{alg:compute_basis} as the method
$\texttt{almost\_commuting\_wilson}$, which receives two inputs $n$
and $m$ and returns:
\begin{itemize}
    \item The differential polynomial $P_m \in \mathbb{Q}\{U\}[\partial]$ of the basis of almost commuting operators for the operator $L_n = \partial^n + u_{2}\partial^{n-2} + \ldots + u_n$, given by Theorem~\ref{thm-basis}.
    \item The list of $(n-1)$ differential polynomials $H_{m,i}(U) \in \mathbb{Q}\{U\}$ such that
    \[[P_m , L_n ] = H_{m,0}(U) + H_{m,1}(U)\partial + \ldots + H_{m,n-2}(U)\partial^{n-2}.\]
\end{itemize}
For a given pair of values $(n,m)$, this method automatically stores
the result on a serialized file to reuse it whenever is needed.

As a demonstration on the performance of the implemented method, we
show in Figure~\ref{fig:speedup} the time spent for computing the
first almost commuting elements~$P_m$ for $m=2,\ldots,14$ for the
generic operators of orders $n=2,3,5,7$. These time executions have
been performed in a laptop, Intel(R) Core(TM) i7-1165G7 (2.8GHz)
processor and 23GB of RAM memory, under version Sage~9.7.

\begin{figure}
    \centering
    \includegraphics[width=\textwidth]{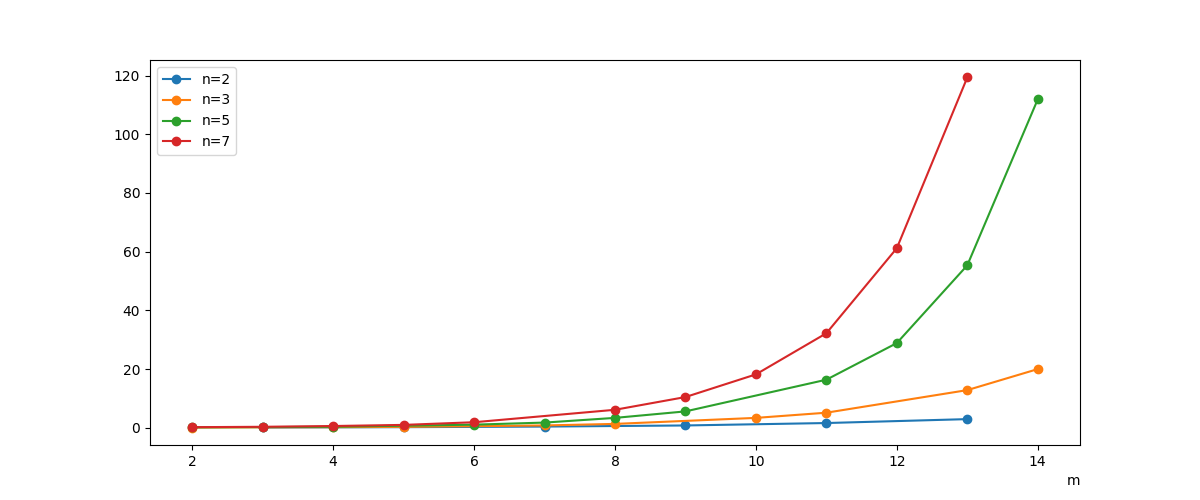}
    \caption{Time (in seconds) spent when calling the method \texttt{almost\_commuting\_wilson} for inputs $(n,m)$ ranging $n=2,3,5,7$ and $m=2,\ldots,14$. Each line represents the different times for a specific value of $n$ while values for $m\equiv 0\ (mod\ n)$ are skipped since they are not representative for time considerations.}
    \label{fig:speedup}
\end{figure}

It can be appreciated that the times are still reasonable (not
bigger than 120 seconds). We would like to remark that the biggest
result (in space) is the case with $n=7$ and $m=13$, where all data
structures take 4MB of disk space. Mathematically speaking, this
result ($n=7$, $m=13$) contains the polynomial~$P_{13}$ for $L_7$
with 830 monomials, and the 6 differential polynomials~$H_{13,i}$
all with degree 7 and ranging from 744 up to 5279 monomials. The
script that generates the figure (i.e., measures the times and
generates the image) can be found in folder \texttt{experiments} in
the aforementioned repository, in the file
\texttt{almost\_commuting.sage}.

\subsection{Results available}

In order to make the computed operators available to public usage,
we have published a GitHub repository with several files with the
results for $n=2,3,5,7$ and $m=2,\ldots,15$. The repository is
called \texttt{da\_wilson} and it is available in the following
link.
\begin{center}
    \url{https://github.com/Antonio-JP/da_wilson}
\end{center}

In this repository we keep for each $n$ and $m$ stored a set of
different files.
\begin{itemize}
    \item An \texttt{.out} file containing the serialize version of the output of method \texttt{almost\_commuting\_wilson}. This contains both the polynomial $P_m$ and the list of $H_{m,i}$.
    \item A series of \texttt{.tex} files containing the LaTeX representation of the elements in the \texttt{.out} file. One file for the polynomial $P_m$ and one file for each of the differential polynomials $H_{m,i}$ for $i=0,\ldots,n-1$.
    \item A series of \texttt{.mpl} files containing a Maple representation of the elements in the \texttt{.out} file. Similar to the \texttt{.tex} files, we store the polynomial $P_m$ and the polynomials $H_{m,i}$ in separated files.
\end{itemize}

The names of the files follow this pattern:
\texttt{(n\_m)[suf].ext}, where;
\begin{itemize}
    \item \texttt{n} and \texttt{m} will take the actual value of the data stored in the file,
    \item \texttt{suf} will indicate what element is in the file. \texttt{P} means the polynomial $P_m$ is in the file, while $\texttt{H\_i}$ indicates that the polynomial $H_{m,i}$ is stored in that file. For the \texttt{.out} files, this suffix will not appear.
    \item \texttt{ext} indicates the extension of the file, meaning the format in which the object is represented.
\end{itemize}

We refer to the \texttt{README} in this repository for advice on how to load data from these files into the different
systems. Also, the script for generating all the Maple and TeX files
is included in this repository.

\medskip

\backmatter

\bmhead{Acknowledgements} All authors are partially supported by the
grant PID2021-124473NB-I00, ``Algorithmic Differential Algebra and
Integrability" (ADAI)  from the Spanish MICINN. R.D. is partially
supported by the Poul Due Jensen Grant 883901.

\bibliography{Bibliography}

\end{document}